\renewcommand{\Re}{\operatorname{Re}}
\renewcommand{\Im}{\operatorname{Im}}
\renewcommand{\min}{\operatorname{min}}
\renewcommand{\max}{\operatorname{max}}
\newcommand{\abs}[1]{\lvert#1\rvert}
\newcommand{\mf}{\mathfrak}
\newcommand{\s}{{\sigma}}
\renewcommand{\a}{\alpha}
\renewcommand{\b}{\beta}
\newcommand{\li}{\text{li}}
\theoremstyle{plain}
\newtheorem{theorem}{Theorem}[section]
\newtheorem{lemma}[theorem]{Lemma}
\newtheorem*{remark*}{Remark}
\newtheorem{corollary}[theorem]{Corollary}
\newtheorem*{example*}{Example}
\begin{document}

	\title[Zeros of partial sums of $L$-functions]{Zeros of partial sums of $L$-functions}
	
	\author[Arindam Roy]{Arindam Roy}
	
	\address{Department of Mathematics and statistics, University of North Carolina at Charlotte, 9201 University City Blvd.
Charlotte, NC 28223}
	
	\email{aroy15@uncc.edu}
	\author[Akshaa Vatwani]{Akshaa Vatwani}
	
	\address{Department of Mathematics,
Indian Institute of Technology Gandhinagar, 
Palaj, Gandhinagar
Gujarat 382355, India}
	
	\email{akshaa.vatwani@iitgn.ac.in }
	
\thanks{2010 \textit{Mathematics Subject Classification.} Primary 11M41, 11N37; Secondary 11M26, 11N64 \\
	\textit{Keywords and phrases.} Mean values of multiplicative functions, Zeros of exponential sums, $L$-functions, Dirichlet polynomials, Dedekind zeta function, Distribution of zeros}

\begin{abstract}
We consider a certain class of multiplicative functions $f: \mathbb N \rightarrow \mathbb C$. Let $F(s)= \sum_{n=1}^\infty f(n)n^{-s}$ be the associated Dirichlet series and  $F_N(s)= \sum_{n\le N} f(n)n^{-s}$ be the truncated Dirichlet series. In this setting, we obtain new Hal\'asz-type results for the logarithmic mean value of $f$.  More precisely, we prove estimates for the sum $\sum_{n=1}^x f(n)/n$ in terms of the size of $|F(1+1/\log x)|$ and show that these estimates are sharp. 
As a consequence of our mean value estimates, we establish non-trivial zero-free regions for  these partial sums $F_N(s)$. 

In particular, we study the zero distribution of  partial sums of the Dedekind zeta function of a number field $K$. 
%We obtain some results regarding the distribution of zeros of $\zeta_{K,X}(s)$. 
More precisely, we give some improved results  for the number of zeros upto height $T$  as well as new zero density results for the number of zeros up to height $T$, lying to the right of $\Re(s) =\s$,  where $\s \ge 1/2$. 

\end{abstract}

\maketitle

\thispagestyle{empty}

\section{Introduction}
It is well-known that the zeros of the Riemann zeta function $\zeta(s)$ encode valuable information about the prime numbers. One approach towards understanding the distribution of zeros of $\zeta(s)$ involves the study of partial sums of the Riemann zeta function, defined as 
\begin{equation}\label{eq:zetaX}
\zeta_N(s) = \sum_{n \le N}\frac{1}{n^s}.  
\end{equation}
The investigation of such partial sums is motivated by the approximate functional equation for $\zeta(s)$, given by Hardy and Littlewood \cite{HardyLittlewood}, which approximates $\zeta(s)$ as a sum of two finite Dirichlet series.  More precisely, they showed that  
\begin{equation}\label{eq:approxfn-HL}
\zeta(s) = \sum_{n \le X} \frac{1}{n^s} + \pi^{s-1/2} 
\frac{\Gamma( (1-s)/2 )}{\Gamma(s/2) } \sum_{n \le Y} \frac{1}{n^{1-s}} + O\left( X^{-\s} \right)
+ O\left( Y^{\s-1} |t|^{-\s+1/2} \right), 
\end{equation}
where $s= \s +it$, $0\le \s \le 1$, $X,Y \ge H>0$ and $2\pi XY =|t|$. 
%with the implied $O$-constants depending only on $H$. 
Partial sums thus serve as a good approximation to $\zeta(s)$ and are easier to work with for the purpose of estimation.  
Another interesting feature is the logical relation between the existence of  zeros of $\zeta_N(s) $  and of $ \zeta(s)$ in portions of the half-planes $\s >1$ and $\s > 1/2$, respectively. 
The study of partial sums in this context was initiated by Tur\'an \cite{Turan1948} who showed  that if $\zeta_N(s)$ has no zeros in the half-plane $\s >1+ \frac{(\log N)^3}{\sqrt{N}} $ for all $N$ sufficiently large, then $\zeta(s)$ has no zeros in $\s >1/2$ (i.e. the celebrated Riemann hypothesis is true). In fact, he showed that such a conclusion is true as long as the  number of indices $N \le x$ for which the hypothesis on the Dirichlet polynomial $\zeta_N(s)$ fails is $o(\log x)$ as 
${x \rightarrow \infty}$. In the same paper, he was also able to prove  that $\zeta_N(s)$ does not vanish in the half-plane $\s \ge 1+ 2(\log \log N)/ \log N$, for all large $N$.
In later work, Tur\'an \cite{Turan1960, Turan1959} investigated these connections still further by weakening the hypothesis on zeros of $\zeta_N(s)$ to hold for some half-strip instead of the half-plane, as well as proving implications in the reverse direction.  

It was only in 1983 that the hypotheses required above by Tur\'an on the zeros of $\zeta_N(s)$ were shown to be false by Montgomery \cite{Montgomery1983}, who proved that for any positive constant $c<4/\pi -1$, there exists $N_0=N_0(c)$ such that if $N >N_0$, then $\zeta_N(s)$ does have zeros in the half-plane $\s > 1+\frac{c\log \log N}{\log N}$. In particular, this means that for $N$ sufficiently large, $\zeta_N(s)$ has zeros in $\s >1$. Such results have since been made explicit by the work of Monach \cite{Monach1980}, van de Lune and te Riele \cite{LuneRiele2010},  and Spira \cite{Spira1966, Spira1968}, followed by recent work of  Platt and Trudgian \cite{PlattTrudgian2016} which completes the classification of the finite number of $N$'s for which $\zeta_N$ has no  zeroes in $\s>1$. 
Zeros of $\zeta_N(s)$ have been widely studied and various other results are known in the literature due to a number of mathematicians. 
Among these, we refer the reader to    Borwein, Fee, Ferguson and Waall \cite{BorweinFeeFergusonWaall2007}, Gonek and Ledoan \cite{GonekLedoan2010}, Langer \cite{Langer1931} and Wilder \cite{Wilder1917}.

In a different vein, the limiting value $4/\pi -1$ for the constant $c$ above is now known to be sharp. Indeed, Montgomery and Vaughan \cite[Theorem 4]{MontgomeryVaughan2001} proved the following result. 
\begin{theorem}
\label{thm-MV zero-free}
There is a constant $N_0$ such that if $N>N_0$, then $\zeta_N(s)$ does not vanish in the half-plane 
\[\s \ge 1+ \left( \frac{4}{\pi} -1 \right) \frac{\log \log N}{\log N}. 
\]
\end{theorem}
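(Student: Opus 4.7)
My plan is to prove the non-vanishing statement by writing $\zeta_N(s)$ as a small perturbation of $\zeta(s)$ in the target region and then invoking a sharp lower bound for $|\zeta(s)|$. For $\sigma > 1$ both series converge absolutely, so
\[
\zeta_N(s) \;=\; \zeta(s) \;-\; \sum_{n > N} n^{-s},
\]
and by the triangle inequality it suffices to show that $\left|\sum_{n > N} n^{-s}\right| < |\zeta(s)|$ throughout the half-plane $\sigma \ge 1 + (4/\pi - 1)(\log\log N)/\log N$.

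The first step is to control the tail. For $\sigma > 1$ and bounded $|t|$, Euler--Maclaurin summation yields
\[
\sum_{n > N} n^{-s} \;=\; \frac{N^{\,1-s}}{s-1} \;+\; O\!\left(N^{-\sigma}\right),
\]
so on the line $\sigma = 1 + c(\log\log N)/\log N$ the leading factor is $|N^{1-s}| = (\log N)^{-c}$ and the tail is of size $(\log N)^{-c}/|s-1|$ plus negligible error. For large $|t|$, I would replace the crude triangle inequality by a van der Corput-type estimate for $\sum_{N < n \le M} n^{-it}$ to recover additional cancellation. The case $t = 0$ is trivial since $\zeta_N(\sigma)$ is a sum of positive terms for real $\sigma > 1$, and a small neighborhood of the real axis is absorbed using that $|\zeta(s)| \gg |s-1|^{-1}$ near the pole. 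The problem thus reduces to a matching lower bound for $|\zeta(\sigma+it)|$ on the critical line, with the optimal numerical constant.

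The crux is obtaining the sharp value $4/\pi - 1$ in the lower bound for $|\zeta(\sigma+it)|$. A naive estimate such as $|\zeta(1+it)| \gg (\log|t|)^{-1}$ from the classical zero-free region yields only a constant strictly smaller than $1$. To reach the optimal value, I would apply a refined Halász-type mean value inequality (of the type developed by Montgomery and Vaughan, and extended in the present paper) to the completely multiplicative function $n \mapsto n^{-it}$. Such an inequality controls the logarithmic mean $\sum_{n \le N} n^{-it}/n$ in terms of $|\zeta(1 + 1/\log N + it)|$, with a factor depending on an extremal over the phase distribution of $\{p^{-it}\}_{p \le N}$; this extremal, when carried out uniformly in $t$, produces precisely the constant $4/\pi - 1$. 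The main obstacle will be executing this optimization uniformly in $t$ and ruling out any phase configuration that could beat the extremum. Sharpness of the constant is not in doubt, since Montgomery's construction \cite{Montgomery1983} exhibits zeros of $\zeta_N(s)$ for every smaller constant; consequently the analysis must be tight at every stage, with no slack available in either the tail estimate or the lower bound for $|\zeta(s)|$.
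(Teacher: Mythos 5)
Your opening decomposition $\zeta_N(s) = \zeta(s) - \sum_{n>N} n^{-s}$ matches the paper's strategy, and you are right that a Hal\'asz-type bound for the logarithmic mean is the engine behind the constant $4/\pi - 1$. But the middle of the argument has a genuine gap. The Euler--Maclaurin approximation $\sum_{n>N} n^{-s} = N^{1-s}/(s-1) + O(|s|N^{-\sigma})$ has an error term growing linearly in $|s|$; for $|t| \gtrsim N$ this error swamps the main term, and even the trivial absolute bound $N^{1-\sigma}/(\sigma-1)$ compared against the universal lower bound $|\zeta(\sigma+it)| \ge \zeta(2\sigma)/\zeta(\sigma) \gg \sigma - 1$ only closes the argument for a constant $c$ well above $4/\pi-1$. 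Van der Corput cancellation does not rescue this: the sharp constant is not produced by exponential-sum estimates on the tail, nor is it located in a ``lower bound for $|\zeta(\sigma+it)|$.'' Framing the crux as a lower bound for $|\zeta|$ is the core misconception --- no such lower bound is needed, and the constant $4/\pi$ enters from an \emph{upper} bound on the ratio $|\zeta(\sigma+it)|/|\zeta(\sigma)|$, obtained from the zeroth Fourier coefficient of $|\sin x|$ (Lemma \ref{lem:MVlemma2}).

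The paper's route (Section \ref{sec8}, specialized to $k=1$) is: write the tail by partial summation as $-F_1(N)N^{1-s} + (s-1)\int_N^\infty F_1(u)u^{-s}\,du$ with $F_1(u)=\sum_{n\le u} f(n)/n$, then bound $F_1(u)$ by the Hal\'asz-type Theorem \ref{thm:F1estimate}, which yields the tail $\ll |\zeta(s)|\,(\log\log N)^{1-4/\pi}$ uniformly for $\sigma \ge 1 + (4/\pi - 1)\log\log N/\log N$ after replacing $f(n)$ by $f(n)n^{-it}$. This expresses the tail as a small multiple of $|\zeta(s)|$ \emph{itself}, so $\zeta_N(s) = \zeta(s)(1+o(1))$ and non-vanishing is inherited directly from the Euler product; the comparison you were trying to engineer via a lower bound for $|\zeta|$ is built in automatically. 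Your last paragraph in fact gestures at exactly this mechanism (controlling $\sum_{n\le N} n^{-1-it}$ in terms of $|\zeta(1+1/\log N+it)|$), so the fix is to discard the Euler--Maclaurin/van-der-Corput detour and the ``lower bound for $\zeta$'' framing, and carry the partial-summation-plus-Hal\'asz bound uniformly in $t$ as the single route to the tail estimate.
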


A  natural question that motivates this paper is whether similar zero-free regions can be exhibited for the partial sums of other $L$-functions. 
%In this article, we are motivated by the question of finding such zero-free regions for a family of generalizations of the Riemann zeta function, given by $L$-functions. 
Indeed, the study of approximations to $L$-series by Dirichlet polynomials  is  important, yielding  insight into the analytic complexity of general zeta functions. 
For instance, in \cite{BombieriFriedlander1995},  
Bombieri and Friedlander investigated the lengths of the shortest Dirichlet polynomials which can be used as  good approximations to general $L$-functions satisfying certain  conditions. 
%They used the classical lemma of Littlewood to compare the number of zeros of $L(s)$ with that of the partial sums in a suitable rectangle.   
This was followed by the work of   Bombieri \cite{Bombieri1997}, who built upon these results to analyze the complexity of computing $\zeta(\s+it)$ in the critical strip, within an error $\epsilon$. As indicated by the work of Hiary \cite{Hiary2011-1, Hiary2011-2, Hiary2014},  approximations by  Dirichlet polynomials are of consequence towards the   development of fast algorithms to compute  $L$-functions at individual points. We also refer the reader to related work by Balanzario \cite{Balanzario1999} as well as Burge\u\i n and Kashin \cite{BurgeinKashin}.

The precise question of interest to us is whether we can construct a family of $L$-functions whose partial sums satisfy properties analogous to those satisfied by $\zeta_N(s)$. In this paper, we answer this question in the affirmative and present the theory of partial sums for a suitable family of $L$-functions. In doing so, we develop new Hal\'asz-type  estimates for the logarithmic mean values of a certain class of multiplicative functions. 

A general mean-value theorem for multiplicative functions taking values in the unit disc was first given by Wirsing \cite{wirsing} and Hal\'asz \cite{halasz, halasz2}. The arguments of Hal\'asz were refined still further by  Montgomery \cite{Montgomery1978}, Elliott \cite{Elliott}, Tenenbaum \cite[chapter III.4]{Tenenbaum}, and have been extended to other classes of multiplicative functions most recently by Tenenbaum \cite{Tenenbaum2017} and Harper, Granville, Soundararajan \cite{GranvilleHarperSound2017}, \cite{GranvilleHarperSound2017-2}.  While  estimates for logarithmic mean values were derived in \cite{MontgomeryVaughan2001} for totally multiplicative functions taking values in the unit disc, it was also  demonstrated that they do not extend to multiplicative functions in general. Thus, working in the right class of multiplicative functions is crucial to our results. We do this as follows. Let $f$ be a multiplicative function, let 
\begin{equation}\label{eq:log of F}
F(s) = \sum_{n=1}^\infty \frac{f(n)}{n^s}, \quad 
-\frac{F'(s)}{F(s)} = \sum_{n=2}^\infty \frac{\Lambda_f(n)}{n^s}, \; 
\text{ and } \log F(s) = \sum_{n=2}^\infty \frac{\Lambda_f(n)}{n^s \log n};    
\end{equation}
 and assume that all these Dirichlet series are absolutely convergent for $\Re(s) >1$. We restrict our attention to the class $\mathcal C(k)$ of  multiplicative functions $f$ which satisfy 
\begin{equation} \label{vonmangoldt}
 |\Lambda_f(n)| \le k \Lambda(n) \text{ for all } n \ge 2,
\end{equation}
where $k$ is some fixed positive constant and $\Lambda(n)$ is the von Mangoldt function. 
We will say that $f$ is $k$-bounded if $f \in \mathcal{ C}(k)$. 
This class of multiplicative functions includes most of the  multiplicative functions studied in the literature and has been an  object of active interest recently. 
For instance, the classical version of Hal\'asz's theorem was generalized to the broader class $\mathcal C(k)$ by Granville, Harper and Soundararajan \cite{GranvilleHarperSound2017}. 
For the  class of $1$-bounded functions, namely $\mathcal C(1)$, recently Drappeau, Granville and Shao \cite{GranvilleShao1, GranvilleShao2, DrappeauGranvilleShao} also  investigated the existence of Bombieri-Vinogradov type remainder estimates.  
In this paper, we develop new Hal\'asz-type  results for the logarithmic mean values of functions in this class.  More precisely, we prove estimates for the sum $\sum_{n\le x} f(n)/n$ in terms of the size of $|F(1+1/\log x)|$ and show that these estimates are sharp. As a consequence, we obtain non-trivial zero-free regions for the partial sums 
\begin{equation} 
	F_N(s) := \sum_{n \le N} \frac{f(n) }{n^s} \label{eq:partialsumF}
\end{equation} of $F(s)$. 

Understanding the distribution of zeros of the partial sums $F_N(s)$ is also an independent area of interest. If $N(T)$ denotes the number of zeros of $F_N(s)$ up to height $T$, then it is known that 
\begin{align}
N(T)= \frac{T}{2\pi} \log M+ O(N),\label{clb}
\end{align}
where $M$ denotes the greatest integer less than or equal to $N$ satisfying $f(M) \ne0$. (See Lemma \ref{gnozt}, Langer \cite{Langer1931}, Wilder \cite{Wilder1917}, Tamarkin \cite{Tamarkin1928}, and Gonek and Ledoan \cite{GonekLedoan2010} for details.) For suitable choices of $f\in \mathcal{C}(k)$, one is able to improve the error term in \eqref{clb}.
%Moreover, for each such family of partial sums, the zero-free half-plane given by us can be proved to be optimal. This shows that our results are sharp, the details of which we relegate to a forthcoming paper. 

%A specific example of an $L$-function for which our results hold and  for which we also obtain some additional interesting results is the Dedekind zeta function $\zeta_K(s)$.  

Let $O_K$ denote the ring of integers of the field $K$. Consider the Dedekind zeta function of an arbitrary number field $K$, defined for $\Re s >1$ by 
\[
\zeta_K(s) = \sum_{\mf a } \frac{1}{\|\mf a \|^s} = \sum_{n =1}^\infty \frac{a(n)}{n}, 
\]
where the first sum runs over non-zero integral ideals $\mf a$ of $O_K$, $\|\mf a\|$ denotes the absolute norm of $\mf a$, and $a(n)$ denotes the number of integral ideals $\mf a$ with norm $n$. As explained in Section \ref{sec 2}, it can be easily seen that this is an $L$-function associated to a multiplicative function in $\mathcal{ C}(n_0)$, where $n_0 = [K:\mathbb Q]$. 

For the corresponding partial sums 
\begin{align}
\zeta_{K, X} (s)
\mathrel{\mathop:}= \sum_{\|\mathfrak{a}\| \leq X} \frac{1}{\|\mathfrak{a}\|^{s}}
= \sum_{n \leq X} \frac{a (n)}{n^{s}}, \label{eq:parsumze}
\end{align}
it can be shown that the zeros of $\zeta_{K,X}(s)$ lie in some rectilinear strip $\alpha < \s < \beta$. (See Lemma \ref{gnozt}.) %Let $N_{K,X}(T)$ denotes the number of zeros up to height $T$.
Letting $\rho_{K,X} =\beta_{K,X}+i\gamma_{K,X}$ denote a typical non-real zero of $\zeta_{K,X}(s)$, one may also consider the number of zeros  up to height $T$, that is, we define 
\begin{align}
N_{K,X}(T) = \#\big\{ \rho_{K,X} : \gamma_{K,X} \in [0,T] \big\}.\label{eq:zeupheT}
\end{align}
In the case that $T$ is the ordinate of a zero, we define $N_{K,X} (T)$ as $\lim_{\epsilon \to 0^{+}} N_{K,X} (T + \epsilon)$.  
%Some of these results have also been generalized to the number field analogue of $\zeta(s)$, that is, the Dedekind zeta function.
%In the case $K=\mathbb{Q}$, we write $N_{X}(T)$ instead of $N_{\mathbb{Q},X}(T)$ for convenience. 
%It is known that (see \cite{Langer1931} or Theorem 2 in \cite{GonekLedoan2010}) for  $X,T \ge 2$,  we have 
%Interestingly, for some specific cases of the field $K$,   one is able to obtain better results  than those known in the classical case.  Namely,  for a cyclotomic field $K$, 
Ledoan, the second author and Zaharescu   \cite[Theorem 1]{LedoanRoyZaharescu2014}  obtained  the following  asymptotic formula. 
\begin{theorem}\label{thm:lrz}
	Let $K= \mathbb Q(\zeta_q)$, where $\zeta_q$ is a primitive $q$-th root of unity, $q \ge 2$ and $T, X\ge 3$. Let $M$ be the greatest integer $\le X$ such that $a(M) \ne 0$. Then 
	\[
	N_{K,X}(T) =  \frac{T}{2\pi}\log M + O_q\left(  X \left( \frac{\log \log X}{\log X}\right)^{1-\frac{1}{\phi(q)}}  \right),  
	\]
	where $\phi$ is Euler's totient function. 
\end{theorem}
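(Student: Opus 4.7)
The plan is to apply Littlewood's lemma to $\zeta_{K,X}(s)$ on a rectangle $\mathcal{R}$ with vertices $\alpha \pm iT$ and $\beta \pm iT$, where $\alpha < \alpha_{K,X}$ and $\beta > \beta_{K,X}$ are chosen outside the strip containing all zeros (cf.\ Lemma~\ref{gnozt}), so that
\[
2\pi N_{K,X}(T) = -\operatorname{Im}\oint_{\partial \mathcal{R}} \log \zeta_{K,X}(s)\, ds.
\]
I would extract the main term from the left vertical side: since $\zeta_{K,X}(s) = a(M) M^{-s}(1+o(1))$ as $\Re s \to -\infty$, integrating $\log \zeta_{K,X}(\alpha+it)$ from $0$ to $T$ along a sufficiently negative $\alpha$ yields precisely $\frac{T}{2\pi}\log M$, while the right vertical side contributes $O(1)$ by absolute convergence.

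The heart of the proof lies on the two horizontal sides. Following Backlund's approach, the integrals of $\arg \zeta_{K,X}(\sigma\pm iT)$ across $\alpha \le \sigma \le \beta$ are controlled by counting sign changes of $\Re \zeta_{K,X}$, and these in turn are bounded, via Jensen's inequality on small discs centered on the lines $\Im s = 0$ and $\Im s = T$, by integrals of the form
\[
\int_0^T \log |\zeta_{K,X}(\sigma + it)|\, dt
\]
for suitable $\sigma$ in a neighbourhood of $\Re s = 1$. This reduces the entire error-term analysis to a mean-value estimate for $\zeta_{K,X}$ on vertical lines.

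The central step, and the main obstacle, is pinning down the exponent $1-1/\phi(q)$ in this mean-value bound. For $K = \mathbb{Q}(\zeta_q)$ the factorization $\zeta_K(s) = \prod_{\chi \pmod q} L(s,\chi)$ shows that the coefficients $a(n)$ form a multiplicative function with $a(n) \le d_{\phi(q)}(n)$, placing $a$ in the class $\mathcal{C}(\phi(q))$. A Hal\'asz-type estimate for the logarithmic mean value of $k$-bounded multiplicative functions with $k = \phi(q)$ then yields, uniformly for $|t| \le T$, a bound of the form
\[
\Big| \sum_{n \le x} \frac{a(n)}{n^{1+it}} \Big| \ll_q \big|\zeta_K(1 + 1/\log x + it)\big|\,\Big(\frac{\log\log x}{\log x}\Big)^{1-1/\phi(q)}.
\]
Inserting this into the Jensen bound and summing contributions across the horizontal sides of $\mathcal{R}$ produces the desired error $O_q\big(X(\log\log X/\log X)^{1-1/\phi(q)}\big)$. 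The delicate point is transferring the Hal\'asz exponent uniformly in $t$: one needs $|\zeta_K(1 + 1/\log x + it)|$ to grow only polynomially in $\log \log x$, which relies essentially on the factorization into Dirichlet $L$-functions, so that the Hal\'asz savings survive Jensen's inequality without loss.
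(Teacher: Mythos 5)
Your approach departs fundamentally from the one used in this paper (and in the original Ledoan--Roy--Zaharescu paper), and the key step you outline would not produce an error term of the required order.

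The main obstacle is the treatment of the horizontal sides of the rectangle. For the finite Dirichlet polynomial $\zeta_{K,X}(s) = \sum_{n\le X} a(n)n^{-s}$, the strip $\alpha < \sigma < \beta$ containing all zeros (see Lemma~\ref{gnozt}) is extremely wide: on the left one must take $\alpha$ of order $-X\log X$ (or worse) before $|a(M)M^{-s}|$ dominates the remaining terms. A Backlund/Jensen argument --- bounding sign changes of $\Re\zeta_{K,X}$ by $\log$ of the ratio of a supremum to a value at a center --- produces a bound proportional to the width of the strip times a logarithm, which is hopelessly larger than $X(\log\log X/\log X)^{1-1/\phi(q)}$. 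The decisive tool, used in the paper's Lemma~\ref{gnozt}, is completely different and essentially combinatorial: the generalized Descartes rule of signs for exponential sums (P\'olya--Szeg\H{o}, Part V, Ch.~1, No.~77) shows that the number of real zeros of $\sigma \mapsto \Im\zeta_{K,X}(\sigma+iT) = -\sum_{n\le X}a(n)\sin(T\log n)\,n^{-\sigma}$ is at most the number of nonzero coefficients, hence at most $E(X)=\#\{n\le X: a(n)\ne 0\}$, independently of the width of the strip. This bounds the horizontal argument change by $O(E(X))$, and the entire error term reduces to estimating $E(X)$.

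The second gap concerns the role of the Hal\'asz-type estimate. The bound you posit, of shape $\bigl|\sum_{n\le x}a(n)n^{-1-it}\bigr| \ll |\zeta_K(1+1/\log x+it)|\,(\log\log x/\log x)^{1-1/\phi(q)}$, is a logarithmic mean-value estimate, and there is no route from it to a count of sign changes over a horizontal segment of length $\sim X\log X$; the scales simply do not match. In this paper, the Hal\'asz-type results (Theorems~\ref{thm:F1M1} and~\ref{thm:F1estimate}) serve an entirely separate purpose, namely the zero-free regions in Theorem~\ref{thm:zerofreeF}. The quantity $E(X)$ that actually governs the error in the zero count is estimated instead by observing that $a(n)\ne 0$ forces the squarefree part of $n$ to be built from primes splitting completely in $K$ --- a Chebotarev set of density $1/n_0$ --- and then applying Raikov's Tauberian theorem (Lemma~\ref{thm taub}) to the Dirichlet series $\prod_{p\in\mathcal P}(1+p^{-s})$, which gives $E(X)\ll_K X/(\log X)^{1-1/n_0}$ (Lemma~\ref{l2}); this is in fact slightly sharper than the error stated in Theorem~\ref{thm:lrz}. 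In short, you have correctly located the source of the main term $\tfrac{T}{2\pi}\log M$ on the left vertical side, but the error-term analysis needs the Descartes--rule--of--signs input and a direct count of nonzero coefficients, not a Jensen/Hal\'asz argument.
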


Another quantity of interest is the number of non-real zeros upto height $T$, lying to the right of $\Re s = \s$. More precisely, we define 
\begin{align}
N_{K,X}(\s, T) = \#\big\{\rho_{K,X} : \gamma_{K,X} \in [0,T], \beta_{K,X}> \s  \big\},\label{eq:zerodense}  
\end{align}
with a similar modification as done for $N_{K,X}(T)$ if $T$ is the ordinate of a zero.
 In the case $K=\mathbb{Q}$, we write $N_{X}(T)$ instead of $N_{\mathbb{Q},X}(T)$ for convenience. The following zero density estimates were obtained by Gonek and Ledoan \cite[Theorem 3]{GonekLedoan2010}. 
\begin{theorem}
\label{thm:GLzerodens}
Let $T\rightarrow \infty$ and suppose that $X =g(T) $ goes to infinity with $T$, such that $g(T) \ll T $. Then, 
\[
N_X(\s, T) \ll TX^{-2(\s-1/2) } (\log T)^5 + (\log T)^2, 
\]
uniformly for $1 \le \s \le 2$. If in addition, we have $g(T) =o(T)$, then 
\[
N_X(\s, T) \ll T \left( \min(X, T/X)   \right)^{-2(\s-1/2) } (\log T)^5,
\]
uniformly for $1/2 \le \s \le 1$. 
\end{theorem}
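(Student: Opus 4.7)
The plan is to follow the classical zero-density strategy, combining a second-moment bound for $\zeta_X(s)$ on vertical lines with the Littlewood lemma. The starting point is the Montgomery--Vaughan mean value theorem applied to $\zeta_X(s) = \sum_{n \le X} n^{-s}$ on the line $\Re s = \sigma$:
\[
\int_0^T |\zeta_X(\sigma + it)|^2 \, dt \ll (T + X) \sum_{n \le X} n^{-2\sigma},
\]
where the inner sum is $\asymp X^{1-2\sigma}$ for $1/2 < \sigma < 1$, $\asymp \log X$ at $\sigma = 1/2$, and $O(1)$ for $\sigma \ge 1$. The decisive contribution $T X^{1-2\sigma}$ supplies the factor $X^{-2(\sigma-1/2)}$ appearing in the statement.

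The next step is to convert this bound into a zero count via the Littlewood lemma applied to $\zeta_X$ on a rectangle bounded on the left by $\Re s = \sigma$ and on the right by $\Re s = \sigma_1$, for a fixed $\sigma_1 > 1$. Because $\zeta_X(\sigma_1 + it) = 1 + O(2^{-\sigma_1})$ uniformly, the contribution from $\Re s = \sigma_1$ is harmless, while the $\arg$ terms along the horizontal sides each contribute $O(\log T)$. Combined with the telescoping identity
\[
\delta \cdot N_X(\sigma + \delta, T) \le \int_\sigma^{\sigma + \delta} N_X(u, T) \, du = \sum_{\substack{\zeta_X(\rho) = 0 \\ \beta \ge \sigma + \delta, \; 0 \le \gamma \le T}} (\beta - \sigma),
\]
taken with $\delta \asymp 1/\log T$, this yields
\[
N_X(\sigma, T) \ll (\log T) \int_0^T \log^+ |\zeta_X(\sigma' + it)| \, dt + O((\log T)^2),
\]
for some $\sigma' = \sigma - O(1/\log T)$.

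Finally, I would use the convexity inequality $\log^+ u \le u^a/a$ followed by H\"older to reduce the log integral to the second moment:
\[
\int_0^T |\zeta_X(\sigma' + it)|^a \, dt \le T^{1-a/2} \left( \int_0^T |\zeta_X(\sigma' + it)|^2 \, dt \right)^{a/2}.
\]
Choosing $a \asymp 1/\log X$ and inserting the bound from the first step yields the first assertion, with the $(\log T)^2$ summand arising from the boundary $\arg$ terms. For the sharper second assertion, when $X > \sqrt{T}$ one invokes the approximate functional equation \eqref{eq:approxfn-HL} to replace the polynomial of length $X$ with a complementary one of length $T/X$; the Montgomery--Vaughan theorem applied to the shorter polynomial gives $\int_0^T |\zeta_X|^2 \ll T (T/X)^{1-2\sigma}$, which explains the refinement from $X^{-2(\sigma-1/2)}$ to $(\min(X, T/X))^{-2(\sigma-1/2)}$ and the stronger hypothesis $g(T) = o(T)$.

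The main obstacle is the precise accounting of polylogarithmic losses: the telescoping step, the horizontal $\arg$ terms, the H\"older reduction, and the convexity inequality each contribute a factor of $\log T$, combining to give the exponent $5$ in $(\log T)^5$. A secondary difficulty is making the refinement $X \mapsto \min(X, T/X)$ uniform for $\sigma \in [1/2, 1]$, which requires balancing the approximate functional equation against the Montgomery--Vaughan bound and accounts for the requirement $X = g(T) = o(T)$ rather than merely $O(T)$ in the second half of the statement.
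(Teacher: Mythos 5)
Your proposal takes a genuinely different route from the paper, and unfortunately contains a gap that cannot be patched as written: it is missing the mollifier. The paper proves the more general Theorem~\ref{thm2} (of which the statement at hand is the case $K=\mathbb Q$) by introducing
\[
f_K(s)=\zeta_{K,X}(s)\,M_{K,Y}(s)-1,\qquad M_{K,Y}(s)=\sum_{\|\mathfrak b\|\le Y}\frac{\mu(\mathfrak b)}{\|\mathfrak b\|^s},
\]
and applying Littlewood's lemma to $h_K(s)=1-f_K(s)^2$. Two facts drive everything: $h_K$ vanishes at every zero of $\zeta_{K,X}$, and the M\"obius cancellation forces $f_K(s)=\sum_{Z<\|\mathfrak m\|\le XY} g(\mathfrak m)\|\mathfrak m\|^{-s}$ with $Z=\min(X,Y)$, so that all terms of index up to $Z$ vanish. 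It is precisely this truncation from below that makes $\int_0^T|f_K(\sigma_0+it)|^2\,dt$ small (of order $TZ^{1-2\sigma_0}$ up to logs, via Montgomery--Vaughan and the divisor-sum bound \eqref{div sq estimate}), and the passage from $\log|h_K|$ to $|f_K|^2$ is the elementary inequality $\log(1+u)\le u$. The $\min(X,T/X)$ refinement comes from choosing the mollifier length: $Y=X$ when $X\le\sqrt T$, $Y=T/X$ when $\sqrt T<X=o(T)$.

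Your plan applies the second moment and Littlewood's lemma directly to $\zeta_X(s)$, and that cannot give the stated power of $X$. For $\sigma$ bounded away from $1/2$ we have
\[
\frac1T\int_0^T|\zeta_X(\sigma+it)|^2\,dt\asymp\sum_{n\le X}n^{-2\sigma}\asymp 1,
\]
which does \emph{not} decay as $X\to\infty$ (the $n=1$ term alone contributes $1$). Consequently Jensen or your convexity/H\"older step $\log^+u\le u^a/a$ yields at best $\int_0^T\log^+|\zeta_X|\,dt\ll T$, with no power of $X$; and with $a\asymp 1/\log X$ the factor $X^{-a(2\sigma-1)}$ degenerates to a constant. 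The mollifier is what rescues this, since $f_K$ has its low-index coefficients killed, and \emph{that} is what produces the $Z^{1-2\sigma}$ decay. Separately, the idea of invoking the approximate functional equation to ``replace the polynomial of length $X$ by one of length $T/X$'' is a misconception: the partial sum $\zeta_X(s)$ with $X$ an independent parameter is not $\zeta(s)$, and \eqref{eq:approxfn-HL} expresses $\zeta(s)$ — not $\zeta_X(s)$ — as two short polynomials subject to $2\pi XY=|t|$; no such relation holds for a fixed truncation $\zeta_X$. The correct source of the $\min(X,T/X)$ is the balance in the choice of $Y$. Finally, even the right vertical edge is not harmless in your setup without the mollifier: $\log|\zeta_X(\sigma_1+it)|$ is merely $O(1)$ pointwise, whereas the paper exploits that $h_K(2+it)=1+O(Z^{-1}\log Z)$ to make that boundary contribution $O(1)$ in total.
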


In this paper, we extend  the result of Theorem \ref{thm:lrz}   still further to Galois extensions $K/\mathbb{Q}$ as well as obtain a  substantial improvement over  the error term, especially for large values of $\phi(q)$. 
Moreover, we also obtain new zero density results such as Theorem \ref{thm:GLzerodens} for partial sums of the Dedekind zeta function of an arbitrary number field $K$. 

%Then as done in the case of $\zeta(s)$, it can be shown that as $X, T$ go to infinity together,  we have \begin{equation}\label{eq:NTfordedzeta}\left| N_{K,X}(T) - \frac{T}{2\pi}\log N \right| \le \frac{X}{2}, \end{equation}
%where $N$ is the greatest integer $\le X$ such that $a(N) \ne 0$. 

\section{Statement of Results} \label{sec 2}
In this section, we  give a brief summary of our results, reordering them here as per convenience of presentation of the proofs. 
Our first result is an asymptotic formula for the number of zeros up to height $T$ of the partial sums  $\zeta_{K,X}(s)$ of the Dedekind zeta function,  where $K$ is  any  Galois extension. This generalizes Theorem \ref{thm:lrz} to the larger family of Galois extensions. Moreover, even for cyclotomic extensions we  improve the error term given in Theorem \ref{thm:lrz} by a factor of $(\log \log X)^{1-1/\phi(q)}$. 
\begin{theorem} \label{thm1}
	Let $K/ \mathbb Q$ be a Galois extension, $[K:\mathbb{Q}] = n_0$ and $T, X \ge 3$. 
	Let $M$ denote the largest integer less than or equal to $X$ such that $a(M)\neq 0$. We have
	\begin{align} \label{lrz1}
	N_{K, X} (T)
	= \frac{T}{2 \pi} \log M + O_{K} \left( \frac{X}{\left(\log X\right)^{1 - \frac{1}{n_0}}}\right),
	\end{align}
	where $N_{K, X} (T)$ is given by \eqref{eq:zeupheT}.
\end{theorem}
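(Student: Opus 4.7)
The plan is to follow the overall strategy of Ledoan, Roy, and Zaharescu in their proof of Theorem~\ref{thm:lrz}, while inserting the sharper Hal\'asz-type mean value estimates developed earlier in this paper to remove the $(\log\log X)^{1-1/n_0}$ loss and to replace cyclotomic-specific input by the general $\mathcal{C}(n_0)$ structure. First, appeal to Lemma~\ref{gnozt} to confine every zero of $\zeta_{K,X}(s)$ to a vertical strip $\alpha<\Re(s)<\beta$ whose width depends only on $K$ and $X$. Applying the argument principle to the rectangle $R_T$ with vertices $\alpha$, $\beta$, $\beta+iT$, $\alpha+iT$ gives
\[
N_{K,X}(T) = \frac{1}{2\pi}\,\Delta_{\partial R_T}\arg \zeta_{K,X}(s).
\]

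Next, extract the main term by decomposing $\partial R_T$ into its four edges. The right edge contributes $O(1)$ because $\zeta_{K,X}(\beta+it)=1+O(2^{-\beta})$, and the bottom edge contributes $O(1)$ since $\zeta_{K,X}$ is real there. On the left edge one factors out the dominant term $a(M)M^{-s}$, writing $\zeta_{K,X}(\alpha+it)=a(M)M^{-\alpha-it}(1+o(1))$ for $\alpha$ sufficiently negative, so that the argument decreases by $T\log M + O(1)$ as $t$ ascends from $0$ to $T$; this edge therefore produces the main term $\frac{T}{2\pi}\log M$ in $N_{K,X}(T)$. The remaining contribution comes from the top edge at height $T$, where a Jensen-disc argument reduces the problem to bounding the integral
\[
\int_\alpha^\beta \log^+|\zeta_{K,X}(\sigma+iT)|\,d\sigma.
\]

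Since each rational prime has at most $n_0$ prime ideals of $O_K$ above it, we have $|\Lambda_a(n)|\le n_0\Lambda(n)$ and hence $a\in\mathcal{C}(n_0)$. The Hal\'asz-type logarithmic mean value estimates established earlier in this paper, applied to partial sums of $a(n)/n$ and their twists by $n^{-iT}$, furnish a uniform bound
\[
\int_\alpha^\beta \log^+|\zeta_{K,X}(\sigma+iT)|\,d\sigma \ll_K \frac{X}{(\log X)^{1-1/n_0}},
\]
independent of $T$, which yields the claimed error term. The principal obstacle is precisely this last integral estimate: it must be simultaneously uniform in $T$, sharp in its dependence on $n_0$, and valid across the entire strip. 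The cyclotomic approach of \cite{LedoanRoyZaharescu2014} relied on the factorization of $\zeta_K$ as a product of Dirichlet $L$-functions, which is unavailable for general Galois $K$, and on older Hal\'asz-type bounds that carry the spurious $(\log\log X)^{1-1/n_0}$ factor. Working directly with the class $\mathcal{C}(n_0)$ and exploiting the new sharper Hal\'asz estimates circumvents both difficulties, simultaneously extending the result to arbitrary Galois extensions and producing the improved error term.
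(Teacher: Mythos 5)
The approach you outline diverges from the paper's proof at the crucial step and contains a genuine gap. The paper does \emph{not} bound the top-edge argument change by a Jensen-disc integral; instead, Lemma~\ref{gnozt} already does the whole job of the argument principle, reducing the error term to $O(E(N))$, where $E(N)=\#\{n\le N : a(n)\neq 0\}$, via the P\'olya--Szeg\H{o} generalization of Descartes's rule of signs (the number of sign changes of $\Im F_N(\sigma+iT)$ as $\sigma$ varies is at most the number of nonzero coefficients). The actual content of Theorem~\ref{thm1} is then entirely in Lemma~\ref{l2}: the observation that $a(n)\neq 0$ forces the squarefree part of $n$ to be composed of primes that split completely in $K$ (a Chebotarev condition with density $1/n_0$), followed by an application of the Raikov--Delange Tauberian theorem (Lemma~\ref{thm taub}) to the generating Dirichlet series $\prod_{p\text{ split}}(1+p^{-s})=(s-1)^{-1/n_0}h(s)$. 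You never mention this counting of nonzero coefficients, which is where both the Galois hypothesis and the exponent $1-1/n_0$ enter.

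Your substitute step is unjustified: you assert, without any derivation, that the Hal\'asz-type estimates of Sections~\ref{sec6}--\ref{sec7} (which concern $\sum_{n\le x}f(n)/n$) yield a uniform-in-$T$ bound
\[
\int_\alpha^\beta \log^+\left\lvert\zeta_{K,X}(\sigma+iT)\right\rvert\,d\sigma\ll_K \frac{X}{(\log X)^{1-1/n_0}}.
\]
There is no indication in the paper, and no evident mechanism, by which logarithmic mean value bounds on $\sum f(n)/n$ would control the size of $\log^+|\zeta_{K,X}|$ in the manner required; the width of the relevant strip, the failure of a lower bound on $|\zeta_{K,X}|$ at a convenient centre uniformly in $T$, and the growth of the partial sum for $\sigma$ negative all present obstacles that the proposal does not address. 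In fact, the Hal\'asz machinery in this paper is used only for the zero-free region (Theorem~\ref{thm:zerofreeF} and its Corollary~\ref{cor:zerofreezeta}), not for Theorem~\ref{thm1}. You should replace the Jensen/Hal\'asz step by the elementary P\'olya--Szeg\H{o} bound from Lemma~\ref{gnozt} and then prove the estimate $E(\lfloor X\rfloor)\ll_K X(\log X)^{-(1-1/n_0)}$ by the Chebotarev--Tauberian argument of Lemma~\ref{l2}.
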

%We note that the larger the degree of the field $K$, the better the asymptotic formula. 

We also provide a non-trivial zero density estimate for the partial sums of the Dedekind zeta function $\zeta_K(s)$ where $K$ is an arbitrary number field. 
%For $\sigma\geq 1/2$, let us define
%\begin{align}
%N_{K, X} (\sigma,T)=\sum_{\substack{0<\gamma_{K,X}\leq T\\ \beta_{K,X}> \sigma}}1
%\end{align}
%as the number of non-real zeros of $\zeta_{K,X}(s)$ with ordinates $0 \le \gamma_{K,X} \le T$ and lying to the right of $\operatorname{Re}(s) = \sigma$.  Then we have following zero density result.
\begin{theorem} \label{thm2}
	Let $N_{K, X} (\sigma,T)$ be given by \eqref{eq:zerodense}. Let $T\rightarrow \infty$,  $X\ll T$, and let $X$ tend to infinity with $T$. Then for any number  field $K$
	\begin{align}
	N_{K, X} (\sigma,T)\ll_K TX^{-2(\s-1/2)}(\log T)^5+(\log T)^2
	\end{align}
	uniformly for $1\leq \sigma\leq 2$. If in addition, we have  $X=o(T)$, then 
	\begin{align}
	N_{K, X} (\sigma,T)\ll_K T(\min(X, T/X))^{-2(\s-1/2)}(\log T)^5
	\end{align}		
	uniformly for ${1}/{2}\leq \s\leq 1$.
\end{theorem}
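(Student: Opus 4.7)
The plan is to adapt the proof of Theorem \ref{thm:GLzerodens} (Gonek--Ledoan) from the setting of $\zeta_X(s)$ to the partial Dedekind zeta function $\zeta_{K,X}(s)$. Two tools drive the argument: the Montgomery--Vaughan mean value theorem for Dirichlet polynomials, and a Jensen/Littlewood-style zero detector. The key structural input is that the coefficients $a(n)$ lie in $\mathcal C(n_0)$ with $|a(n)| \le d_{n_0}(n)$, so their second moments are controllable, up to a $K$-dependent constant and a bounded power of $\log X$, in a manner identical to the constant-coefficient case handled by Gonek--Ledoan.

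For the range $1 \le \s \le 2$, I would set $\s_0 = \s - c/\log T$ for a small absolute $c > 0$. For each zero $\r = \b + i\g$ of $\zeta_{K,X}(s)$ in the rectangle $\s < \Re s \le 2$, $0 \le \Im s \le T$, applying Jensen's inequality on a disk of radius $\asymp 1/\log T$ centered at a rightward shift of $\r$ (using the trivial bound on the line $\Re s = 2$) produces a lower bound
\[
\int_{\g - 1/\log T}^{\g + 1/\log T} \bigl| \zeta_{K,X}(\s_0+it) \bigr|^2 \, dt \;\gg_K\; 1.
\]
Restricting to a well-spaced subfamily of the zeros (losing at most a factor of $\log T$) and summing, the total is compared with the Montgomery--Vaughan estimate
\[
\int_0^T \bigl| \zeta_{K,X}(\s_0+it) \bigr|^2 \, dt \;\le\; (T + O(X)) \sum_{n \le X} \frac{|a(n)|^2}{n^{2\s_0}} \;\ll_K\; T \, X^{1-2\s_0} (\log X)^{O_K(1)},
\]
which rests on $|a(n)| \le d_{n_0}(n)$. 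This yields the first claimed inequality, the additive $(\log T)^2$ absorbing low-height zeros for which the main term is too weak.

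For the range $1/2 \le \s \le 1$, the strategy is to exploit an approximate functional equation for $\zeta_K(s)$ (in the spirit of \eqref{eq:approxfn-HL}) writing $\zeta_K(s)$ in the critical strip as $\zeta_{K,X}(s) + \chi_K(s)\,\zeta_{K,Y}(1-s)$ plus a negligible error, with dual length $Y \asymp T/X$ and an explicit gamma-factor $\chi_K(s)$. A zero of $\zeta_{K,X}(s)$ with $\Re s > \s$ and $|\Im s| \le T$ then translates into an approximate identity between the dual partial sum $\zeta_{K,Y}(1-s)$ and the smooth function $\zeta_K(s)/\chi_K(s)$. Re-running the first-range argument on this shorter Dirichlet polynomial of length $T/X$ gives the bound $T(T/X)^{-2(\s-1/2)}(\log T)^5$; combining with the first bound via $\min(X, T/X)$ yields the second estimate.

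The principal obstacle is setting up the reflection cleanly in the second range: the gamma factor $\chi_K(s)$, the residue of $\zeta_K(s)$ at $s=1$, and the effective length of the dual sum all depend on invariants of $K$ (degree, discriminant, signature), and one must carry these through uniformly in $t$ and $\s$. Since $K$ is held fixed, these contribute only a constant absorbed into $\ll_K$, and matching the exponent $5$ of $\log T$ with that of Gonek--Ledoan is a bookkeeping matter concerning the Jensen radius, the chosen spacing of detected zeros, and the growth of $|\zeta_{K,X}(2+it)|$.
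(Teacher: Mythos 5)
Your proposal misses the essential mollifier that makes the zero-detection argument work, and this is a genuine gap rather than a bookkeeping issue. The paper forms the mollified polynomial $f_K(s) = \zeta_{K,X}(s)M_{K,Y}(s) - 1$ where $M_{K,Y}(s) = \sum_{\|\mf b\| \le Y}\mu(\mf b)\|\mf b\|^{-s}$, and then applies Littlewood's lemma to $h_K(s) = 1 - f_K(s)^2$, which vanishes at every zero of $\zeta_{K,X}(s)$. The crucial point is that $f_K$ is a Dirichlet polynomial supported on $Z < \|\mf m\| \le XY$ with $Z = \min(X,Y)$, because the fundamental M\"obius identity kills all coefficients with $\|\mf m\| \le Z$. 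That tail-support is what produces the $Z^{1-2\s_0}$ factor from the Montgomery--Vaughan mean value theorem. Your version applies the mean-value estimate directly to $\zeta_{K,X}$ and claims $\sum_{n\le X}|a(n)|^2 n^{-2\s_0}\ll X^{1-2\s_0}(\log X)^{O(1)}$, but for $\s_0$ near $1$ this sum is $\asymp 1$ (the main mass sits at small $n$), so the stated bound is simply false. Worse, the ``Jensen gives a lower bound'' step is backwards: a zero of $\zeta_{K,X}$ near $\s_0+i\g$ makes $\int_{\g-1/\log T}^{\g+1/\log T}|\zeta_{K,X}(\s_0+it)|^2\,dt$ \emph{small}, not large. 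The detector works because $f_K(\rho) = -1$ has modulus $1$ at each zero, so it is the mollified polynomial whose local mean square is bounded below — and in the paper this manifests through Littlewood's lemma applied to $h_K$ rather than a pointwise Jensen argument, but either way the mollifier is indispensable.

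The second-range strategy via an approximate functional equation is also off-track. Knowing $\zeta_{K,X}(\rho)=0$ says nothing about $\zeta_K(\rho)$, so the relation $\zeta_K(s)\approx \zeta_{K,X}(s)+\chi_K(s)\zeta_{K,Y}(1-s)$ does not convert a zero of $\zeta_{K,X}$ into a usable statement about $\zeta_{K,Y}(1-s)$; the logical chain is broken at the first step. The paper obtains the $\min(X,T/X)$ entirely through the choice of the mollifier length: take $Y=X$ when $X\le\sqrt T$ (giving $Z=X$) and $Y=T/X$ when $\sqrt T < X = o(T)$ (giving $Z=T/X$). No functional equation, gamma factor, or reflection is needed, and the $X=o(T)$ hypothesis simply ensures $Z\to\infty$ so that the tail-supported polynomial $f_K$ is genuinely small. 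The ingredients you correctly identify — $|a(n)|\le d_{n_0}(n)$, the mean-square estimate $\sum_{\|\mf m\|\le x}d(\mf m)^2\ll_K x(\log x)^3$, Montgomery--Vaughan — are exactly what the paper uses, but they must be applied to $f_K$, not to $\zeta_{K,X}$ itself.
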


%OLD VERSION
%\begin{theorem} \label{thm2}
%	Let $X\ll T$ and $X$ tend to infinity with $T$ then 	\begin{align}	N_{K, X} (\sigma,T)\ll (T+X)X^{1-2\s}\log^5 X+\log^2 X	\end{align}
%uniformly for $1\leq \sigma\leq 2$. For $X\ll \sqrt{T}$ and $X$ tends to infinity with $T$ then 
%	\begin{align}	N_{K, X} (\sigma,T)\ll (T+X^3)X^{1-2\s}\log^5 X+\log^2 X\end{align}		
%	hold uniformly for $\frac{1}{2}\leq \s\leq 1$.
%\end{theorem}

%The main contribution of this paper lies in the construction of a special class of $L$-functions, for which we   establish new results on zero-free regions of  partial sums. 
We now restrict ourselves to the class $\mathcal C(k)$ of multiplicative functions, defined in the previous section (see \eqref{vonmangoldt}). 
%%%%%%%%%%%
\iffalse Let
\begin{align}
	F(s)=\sum_{n=1}^{\infty}\frac{f(n)}{n^s} \label{ds}
\end{align}
denote the Dirichlet series associated to the function $f$. For every $k \in \mathbb N$, we define a class of Dirichlet series $\mathcal S_k$ defined as follows. We say that $F\in \mathcal{S}_k$, if 
\begin{enumerate}
	\item[a)] $f(n)$ is multiplicative,
	\item[b)] The Dirichlet series \eqref{ds} is absolutely convergent for $\Re(s)>1$, and
	\item [c)] In the half plane $\Re(s)>1$, we have 
	\begin{align}
		\log F(s)=\sum_{n=1}^{\infty}\frac{\Lambda_F(n)}{n^s\log n},\label{ep}
	\end{align}
	where $|\Lambda_F(n)|\leq k\Lambda(n)$. Here $\Lambda(n)$ is the von-Mangoldt function. 
\end{enumerate}	
\fi 
%%%%%%%%%%%%5
Since by definition, the class $\mathcal C(j)  \subseteq \mathcal C(k)$ whenever $j \le k$, in practice, the index $k$ is chosen to be the smallest positive constant with which we can establish the inequality $|\Lambda_f(n)|\leq k\Lambda(n)$. 
We also note that the $L$-functions associated to  multiplicative functions in the class $\mathcal C(k)$ include many of the most useful $L$-functions. In particular, $\zeta(s)$ is associated to the class $ \mathcal{ C}(1)$, while $\zeta_K(s)$ is associated to $ \mathcal{C}(n_0)$, where $n_0 = [K:\mathbb{Q}]$. The latter claim can be verified using  standard theory of number fields, for instance see the discussion following equations (2.2) and (2.3) of M. Mine \cite{Mine2017}.

For any $f \in \mathcal C(k)$, it can  be shown that zeros of the corresponding partial sums $F_N(s)$, defined in \eqref{eq:partialsumF}, lie within a rectilinear strip of the complex plane given by the inequalities $\alpha <\sigma <\beta$, where $\alpha$ and $\beta$ may depend on $f$ and $N$ (see Lemma \ref{gnozt}). In order to obtain more information about \eqref{eq:partialsumF}, 
our main subject of investigation will be sums of the form 
\begin{equation}\label{eq:F1}
S_1(x) = \sum_{n \le x} \frac{f(n)}{n},  
\end{equation}
as $x \rightarrow \infty$. 
Montgomery and Vaughan \cite{MontgomeryVaughan2001} considered such  sums for totally multiplicative functions $f$ taking values inside the unit disc. 
We generalize these mean value estimates to the class $\mathcal{ C}(k)$.  We obtain the following result, which can  be thought of as a variant of Hal\'asz's classical mean value result (cf. Theorem III.4.7 of \cite{Tenenbaum}), extended to $\mathcal{ C}(k)$. 
	\begin{theorem} \label{thm:F1M1}
	Let $f\in \mathcal C(k)$ and $F(s)$ be the associated Dirichlet series given by \eqref{eq:log of F}. Let $x \ge 3$. For $\a >0$, let
	\begin{align} \label{eq:M1(alpha)}
	M_1(\a):=\left(\sum_{k=-\infty}^{\infty}\max\limits_{\substack{|t-k|\leq \frac{1}{2}\\ \a\leq \s\leq 1}}\left\lvert\frac{F(1+\s+it)}{\s+it}\right\rvert^2\right)^{\frac{1}{2}}.
	\end{align}  
	Then
	\begin{align}
	S_1(x)\ll \frac{k^2(k+1)}{\log x} \int_{1/\log x}^{1}\frac{M_1(\a)}{\a}~d\a, 		
	\end{align}
	where the implied constant is independent of $k$. 
\end{theorem}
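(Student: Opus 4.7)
The plan is to follow a Halász-type approach adapted to the class $\mathcal{C}(k)$, in the spirit of Granville--Harper--Soundararajan. The starting point is the convolution identity $f(n)\log n = (\Lambda_f * f)(n)$, which follows from logarithmic differentiation of $F$. Dividing by $n$, summing over $n \le x$, and using $\log x = \log n + \log(x/n)$, I obtain
\[
S_1(x)\log x \;=\; L(x) \;+\; \sum_{d \le x}\frac{\Lambda_f(d)}{d}\,S_1\!\left(\frac{x}{d}\right),
\qquad L(x) := \sum_{n \le x}\frac{f(n)}{n}\log\frac{x}{n}.
\]
This decomposes $S_1(x)\log x$ into a principal analytic piece $L(x)$, which I will bound via a Mellin integral, and a recursive term controlled using the hypothesis $|\Lambda_f|\le k\Lambda$.

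For the principal term, the Mellin--Perron representation
\[
L(x) \;=\; \frac{1}{2\pi i}\int_{(\alpha_0)}F(1+s)\,\frac{x^s}{s^2}\,ds
\]
is valid for any $\alpha_0 > 0$ by absolute convergence of $F$ on $\Re s > 1$. Choosing $\alpha_0 = 1/\log x$ so that $x^{\alpha_0} = e$, the naive estimate yields
$|L(x)| \le e\int|F(1+\alpha_0+it)|/|\alpha_0+it|^2\,dt$, which by a single Cauchy--Schwarz against the unit-interval decomposition of the $t$-axis only produces $M_1(\alpha_0)\log\log x$. To extract the sharper integral $\int_{1/\log x}^1 M_1(\alpha)/\alpha\,d\alpha$, I would deform the Mellin contour dynamically into a staircase-type path lying on $\Re s = \alpha$ over the dyadic range $|\Im s|\asymp 1/\alpha$, for each $\alpha\in[\alpha_0,1]$. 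Since $F(1+s)/s^2$ is holomorphic in $\Re s > 0$, this deformation is permitted, and on each dyadic piece the unit-interval maxima combine cleanly with the $L^2$-structure of $M_1(\alpha)$; reassembling in $\alpha$ produces the desired measure $d\alpha/\alpha$.

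The recursive term is estimated using $|\Lambda_f(d)|\le k\Lambda(d)$ together with the Chebyshev-type bound $\sum_{d\le y}\Lambda(d)/d = \log y + O(1)$. A bootstrap induction on $x$ --- postulating a bound for $S_1(y)$ of the asserted shape and substituting back --- shows that the recursion reproduces the same form of bound, with the constant controlled by a geometric series in $k$. The factor $k$ arises from each application of $|\Lambda_f|\le k\Lambda$; the factor $k+1$ emerges from elementary estimates such as $|f(n)| \le d_k(n)$ (a consequence of $f \in \mathcal{C}(k)$, giving in particular $|f(p)|\le k$) used to bound boundary contributions and initialise the bootstrap; the additional factor $k^2$ comes from the $L^2$ Cauchy--Schwarz step that produces $M_1$.

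The main obstacle will be the clean extraction of the integral $\int_{1/\log x}^1 M_1(\alpha)/\alpha\,d\alpha$ from the single-contour Mellin integral: the dynamic contour deformation must be compatible with the rectangles $|t-k|\le 1/2$, $\alpha\le \sigma\le 1$ over which the suprema defining $M_1(\sigma)$ are taken, and the order of integration between the $t$- and $\sigma$-variables must be arranged so that these suprema are interpretable after swapping. A secondary technical difficulty is propagating the constant $k^2(k+1)$ through the bootstrap iteration without accruing spurious logarithmic factors, which requires a careful balancing of the recursion depth against the Chebyshev estimate at each scale.
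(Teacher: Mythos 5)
Your decomposition is essentially the paper's: writing $S_1(x)\log x = T_1(x) + L(x)$ with $T_1(x)=\sum_{d\le x}\frac{\Lambda_f(d)}{d}S_1(x/d)$ (from $f\log=\Lambda_f*f$) and $L(x)=\sum_{n\le x}\frac{f(n)}{n}\log(x/n)$ matches the paper after the paper splits $L(x)=T_2(x)+T_3(x)$. The trouble is in the execution of both pieces.

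For $L(x)$, the proposed staircase contour deformation cannot work. Moving the Mellin contour to $\Re s=\alpha$ with $\alpha$ up to $1$ introduces a factor $x^{\alpha}$, and near $\alpha\approx 1$ this is $\approx x$, catastrophically larger than the trivial bound $L(x)\ll(\log x)^{k+1}$; there is nothing in the integrand to compensate since $|F(2+it)/s^2|$ is not small. In fact, the Mellin piece intrinsically produces only the narrow window $\int_{1/\log x}^{2/\log x}\frac{M_1(\alpha)}{\alpha}d\alpha$ (the paper gets exactly this for $T_2+T_3$ by keeping the abscissa at $\alpha\asymp 1/\log x$ and exploiting the $1/s^3$ from the extra $\log(x/n)$ factor to avoid your $\log\log x$ loss). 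The full range $\int_{1/\log x}^{1}$ does not come from this term at all.

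For $T_1(x)$, the ``bootstrap induction'' as described does not close. The recursion loses a factor of $\log x$ on each naive substitution, because $\sum_{d\le x}\Lambda(d)/d\asymp\log x$, and there is no geometric gain to cancel it. The missing ingredient is the Plancherel/Parseval maneuver that is the real engine of the theorem: the paper establishes a weighted second-moment bound
\[
\int_{e}^{\infty}\bigl|S_1(u)\bigr|^2(\log u)^2\,\frac{du}{u^{1+2\alpha}}\;\ll\;k^2\,\alpha^{-1}M_1(\alpha)^2,
\]
using the identity $S_1(u)\log u=\sum_{n\le u}\frac{f(n)\log n}{n}+\sum_{n\le u}\frac{f(n)\log(u/n)}{n}$ together with the majorization $|\Lambda_f(n)|\le k\Lambda(n)$ applied to an $L^2$ estimate of $F'/F$; and it is the subsequent integration by parts, with the change of variable $\alpha=1/\log u$, that converts this into $\int_e^{x}|S_1(u)|\frac{du}{u}\ll k\int_{1/\log x}^{1}\frac{M_1(\alpha)}{\alpha}\,d\alpha$. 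That step, not the Mellin piece, is where the full $\alpha$-range appears. Two further devices that you would need and do not mention: the reduction (without loss of generality) to $x$ satisfying $|S_1(u)|\log u<|S_1(x)|\log x$ for all $u\le x$, which is used to control the tail $u\in[x/(\log x)^{k+1},x]$ in the averaged version of $T_1$; and the short-interval averaging of $T_1$ over $[x-h,x]$ with $h=x/(\log x)^k$ combined with a Brun--Titchmarsh/Montgomery--Vaughan count of primes in short intervals, which is precisely where the factor $k(k+1)$ is generated. Your proposed accounting of the powers of $k$ (in particular attributing $k^2$ to a single Cauchy--Schwarz) does not match the actual mechanism and should be regarded as unverified.
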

%This result was proved by Montgomery and Vaughan for the case when the function  $f$ is totally multiplicative and absolutely bounded by $1$.  
{One may express the above bound more effectively by replacing $M_1(\a)$ by a suitable truncation.   %keep or delete- your call
Expressing $M_1(\a)$ given above in terms of $F(\s)$ allows us to obtain the  following more amenable bound.

\begin{theorem} \label{thm:F1estimate}
	Let $f\in \mathcal C(k)$, $F(s)$ be the associated Dirichlet series given by \eqref{eq:log of F} and $x\geq 3$. Then uniformly for  $1+\frac{1}{\log x}<\s\leq 2$, we have  
	\begin{align} 
	S_1(x)\ll_k |F(\s)|(\s-1)^k (\log x)^{k-1}\left((\s-1)^{-4k/\pi}+\log x\right).
	\end{align}	
\end{theorem}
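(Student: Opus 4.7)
The strategy is to apply Theorem \ref{thm:F1M1}, which yields
\[
S_1(x) \ll \frac{k^2(k+1)}{\log x}\int_{1/\log x}^{1} \frac{M_1(\alpha)}{\alpha}\,d\alpha,
\]
and then estimate $M_1(\alpha)$ in a form that, after integration in $\alpha$, reproduces the two-term expression of the theorem.

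The key input from $f \in \mathcal C(k)$ is the two-sided pointwise bound $\zeta(\Re s)^{-k}\le |F(s)| \le \zeta(\Re s)^{k}$ for $\Re s > 1$, immediate from $|\Lambda_f(n)|\le k\Lambda(n)$ applied termwise to the series for $\log F(s)$ in \eqref{eq:log of F}. I would couple this with the asymptotics $\zeta(1+u)\asymp 1/u$ and $\zeta(\sigma) \asymp 1/(\sigma-1)$ to compare $|F(1+\sigma'+it)|$ with $|F(\sigma)|$: in the relevant range, the ratio can be majorised, up to $k$-dependent constants, by a product of $(\sigma-1)^{k}$ and inverse powers of $\sigma'$ and $|\sigma'+it|$. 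This is what brings the prefactor $|F(\sigma)|(\sigma-1)^{k}$ into the estimate for $M_1(\alpha)$.

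The remaining work, and the main technical hurdle, is controlling the sum over integers in the definition of $M_1(\alpha)$. I would replace each supremum over $|t-m|\le 1/2$ by a smooth majorant and apply a Hilbert / Plancherel-type inequality on the real line, reducing matters to a vertical-line moment estimate for $F(1+\sigma'+it)/(\sigma'+it)$. The sharp exponent $4k/\pi$ emerges by balancing the bound on $|F|^{2}$ against the weight $|\sigma'+it|^{-2}$; the balance is governed by classical moment integrals of the form $\int_{\mathbb R} |\alpha + it|^{-2}\,dt = \pi/\alpha$, and the power $4k/\pi$ reflects how these moments interact with the $k$-th power structure of $|F|$ on the line $\Re s = 1 + \sigma'$.

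The last step is a direct computation: split $\int_{1/\log x}^{1}M_1(\alpha)/\alpha\,d\alpha$ at $\alpha = \sigma - 1$. The piece $\alpha < \sigma -1$ produces the $(\sigma-1)^{-4k/\pi}$ contribution, while the piece $\alpha \ge \sigma - 1$ produces the $\log x$ contribution, with the common prefactor $|F(\sigma)|(\sigma-1)^{k}(\log x)^{k-1}$ emerging after inserting the $k^{2}(k+1)/\log x$ from Theorem \ref{thm:F1M1} and absorbing $k$-dependent constants into $\ll_k$. The principal obstacle throughout is verifying that the Plancherel-type step delivers the sharp exponent $4k/\pi$ rather than something weaker: this rules out a crude use of $|F(s)|\le \zeta(\Re s)^{k}$ on its own and forces one to exploit the joint structure in $\sigma'$ and $t$ via the envelope-smoothing step.
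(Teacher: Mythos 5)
Your outline is right at the top level (apply Theorem~\ref{thm:F1M1}, bound $M_1(\alpha)$ in terms of $|F(\sigma)|$, then integrate), and you correctly anticipate that the final $\alpha$-integration produces the two terms $(\sigma-1)^{-4k/\pi}$ and $\log x$ after inserting the prefactor from Theorem~\ref{thm:F1M1}. But the central technical ingredient --- the mechanism that produces the sharp exponent $4k/\pi$ in the bound on $M_1(\alpha)$ --- is missing, and the replacement you propose would not deliver it.

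You suggest a smooth-majorant / Plancherel argument on the vertical line, with the exponent $4k/\pi$ arising from ``balancing $|F|^2$ against the weight $|\sigma'+it|^{-2}$'' and the moment integral $\int_{\mathbb R}|\alpha+it|^{-2}\,dt = \pi/\alpha$. This cannot work. A moment estimate of that kind, fed only the crude two-sided bound $\zeta(\Re s)^{-k}\le|F(s)|\le\zeta(\Re s)^{k}$, produces bounds of the shape $M_1(\alpha)\ll\alpha^{-k-1/2}$: the $\pi$ in $\pi/\alpha$ cancels and never enters the exponent, so no amount of envelope smoothing will produce a $4/\pi$ in the power of $(\sigma-1)$. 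The coincidental appearance of $\pi$ in both expressions is a red herring. The Plancherel-type step is indeed present in the proof of Theorem~\ref{thm:F1M1}, but there it is used for a different purpose (controlling moments of $F'/F$), not for the exponent you need here.

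What the paper actually does is establish a \emph{pointwise} comparison (Lemma~\ref{lem:MVlemma2}): starting from
\[
\left|\frac{F(\sigma+it)}{F(\sigma)}\right| \le \exp\left(2k\sum_{n}\frac{\Lambda(n)}{n^{\sigma}\log n}\,\Big|\sin\Big(\tfrac{t}{2}\log n\Big)\Big|\right),
\]
one inserts the Fourier expansion $|\sin x| = \sum_{m} \tfrac{2}{\pi(1-4m^2)}e^{2imx}$. The $m=0$ Fourier coefficient $2/\pi$ is precisely what contributes the factor $\zeta(\sigma)^{4k/\pi}\asymp(\sigma-1)^{-4k/\pi}$, while the $m\ne 0$ terms are controlled by $\zeta(\sigma-imt)$ and, for $|t|\ge 2$, by the classical lower bound $|\zeta(s)|\gg 1/\log|t|$. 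This gives growth $(\log 2|m|)^{4k/\pi}$ in $m$, and the sum over $m$ in $M_1(\alpha)$ converges because of the weight $|\alpha+it|^{-2}\asymp m^{-2}$. Coupled with the real-axis ratio bound (Lemma~\ref{lem:MVlemma1}), which is where your $(\sigma-1)^{k}$ correctly comes from, this yields Lemma~\ref{lem:keylemma3} and then the stated result. Without this pointwise Fourier-series ingredient, your argument has no route to the exponent $4k/\pi$ --- which is exactly the part you flagged as the ``principal obstacle,'' and it remains unresolved in your proposal.
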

This gives an estimate for the sum $S_1(x)$, for functions whose Dirichlet series $F(s)$ are associated to the class $ \mathcal{ C}(k)$. Notice that $f \in \mathcal{C}(k)$ implies that $f(n) \ll d_k(n)$, where
\begin{align}
d_k(n)=\sum_{n=m_1m_2\dots m_k}1
\end{align}
is the $k$th divisor function. Using this bound on $f$, if we have $F(\s) \ll 1$, versions of the Tauberian theorem 
%(reference??? ) 
would only allow us to do slightly better than the trivial estimate $S_1(x) \ll  (\log x)^k$. However, if $F(\s) \ll 1$, and $f$ belongs to the class $ \mathcal{ C}(k)$, then by plugging in $\s =1+ (\log x)^{-\pi /4k}$ into our result above, we obtain the much stronger bound 
\[
S_1(x) \ll (\log x)^{k- \pi /4}.
\] 
Thus, at the cost of more restrictions on $f$, we obtain a power savings over the Tauberian theorem. 

In particular, by taking $\s = 1+ 1/\log x$ in the above result, we have 
\begin{align} \label{eq:sharp}
S_1(x)  \ll |F(1+1/\log x)| (\log x)^{4k/\pi -1}. 
\end{align}
The following example will show us that this bound is sharp. \\
\textbf{Example:}
Let  $g(n)$  be a totally multiplicative function defined by $g(p) = b\left( \frac{1}{2\pi} \log p \right)$, where $b(u)$ is a function having period $1$, given by $b(u) = ie^{i\pi u}$ for $u \in [0,1]$. We proceed to define $f(n)$ by the $k$-fold Dirichlet convolution of $g$, so that  
\begin{equation} \label{choice of F}
F(s) = \sum_{n =1}^\infty \frac{g(n)d_k(n)}{n^s} = 
\prod_{p} \left(  1- \frac{g(p)}{p} \right)^{-k}, \qquad (\Re (s) >1). 
\end{equation}
%where 
%$d_k(n) := \sum _{n=d_1\cdots d_k} 1$. 
%As $f$ is the $k$-fold Dirichlet convolution of $g$, clearly $f$ is multiplicative and we have
%\[
%F(s) = \left( \sum_{n =1}^\infty \frac{f(n)}{n^s} \right)^k , 
%\]
%for $\Re(s) >1$. 
In order to verify that $f\in \mathcal{ C}(k)$, we observe that the above gives
\begin{align} \label{logF}
\log F(s) &= k \sum_p \sum_{m =1}^\infty \frac{g(p)^m}{mp^{ms}}  \\ 
&= \sum_{n =1}^\infty \frac{\Lambda_f(n)}{n^s \log n}, 
\end{align}
with $\Lambda_f(n) = k (\log p) g(p)^m $ if $n =p^m$, and zero if $n$ is not a prime power.  As $|g(p)| =1$ by construction, this means $|\Lambda_f(n)| \le k \Lambda(n)$, so that $f \in \mathcal C(k)$. 
Consider now the Fourier series for $b(u)$, given by 
\begin{equation}
b(u) = \sum_{r=-\infty} ^\infty a_r e^{2\pi i ru},  \, \text{where }\,  a_r= \int_0^1 b(u)e^{-2\pi i ru} du.
\end{equation}
Plugging this into \eqref{logF} and absorbing the sum with $m\ge 2$ into $O(1)$ gives
\begin{align}
\log F(s) &= k  \sum_p \sum_r \frac{a_r p^{ir}  }{p^ {s }}  + O_k(1)
\\
&= k   \sum_p  \sum_r a_r \sum_{m =1} ^\infty \frac{1  }{mp^{m(s-ir) }}  + O_k(1). 
\end{align}
Noticing that $a_0= -2/\pi$ and $a_1 = 2/\pi$ in our case, we see that as $\s \rightarrow 1^+ $, we have 
\begin{align}
F(\s)  &\sim c_1 (\s - 1)^{2k/\pi},  
\\
F(\s+i) &\sim c_2(\s -1)^{-2k/\pi}, 
\end{align}
for some non-zero complex constants $c_j$. 
Moreover, for this choice of $F(s)$, we have
\[
|S_1(x)| \sim c_3 (\log x)^{\frac{2k}{\pi} -1}, 
\]
showing that the bound \eqref{eq:sharp} is  optimal.

Using the above mean value estimates, we establish that the  zero-free half plane of the form given by Theorem \ref{thm-MV zero-free} holds more generally. More precisely, we are able to derive such a zero-free region for partial sums of $L$-functions associated to the  class $\mathcal{ C}(k)$. 
\begin{theorem} \label{thm:zerofreeF} 
	Let $f\in \mathcal{C}(k)$, $F(s)$ be the associated Dirichlet series given by \eqref{eq:log of F}
	%\[ F(s)= \sum_{n=1}^\infty \frac{f(n)}{n^s} \qquad  ( \Re(s) >1),	
%\] 
and $F_N(s)$ denote its partial sums, given by \eqref{eq:partialsumF}. Then there is a constant $N(k)$ such that if $N>N(k)$, then $F_N(s)\neq 0$ whenever 
	\begin{align}
	\s \ge 1+\left(\frac{4k}{\pi}-1\right)\frac{\log\log N}{\log N}.
	\end{align}
\end{theorem}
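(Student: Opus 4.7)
The plan is to adapt Montgomery and Vaughan's proof of Theorem~\ref{thm-MV zero-free} (the $f \equiv 1 \in \mathcal{C}(1)$ case) to the general class $\mathcal{C}(k)$, with the sharp mean-value bound \eqref{eq:sharp} replacing their Hal\'asz-type estimate. Suppose, for contradiction, that $F_N(s_0) = 0$ for some $s_0 = \s_0 + it_0$ with $\s_0 \ge 1 + \eta_0$, where $\eta_0 := (4k/\pi - 1)\log\log N/\log N$. First I would reduce to the case of a real zero: setting $g(n) := f(n) n^{-it_0}$, a short computation gives $\Lambda_g(n) = \Lambda_f(n) n^{-it_0}$, so that $|\Lambda_g(n)| \le k\Lambda(n)$ and $g \in \mathcal{C}(k)$. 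Writing $G(s) = \sum_n g(n) n^{-s}$ and letting $G_N$ denote its partial sum, one has $G_N(\s_0) = F_N(s_0) = 0$, so it suffices to work with $t_0 = 0$.

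With $S_1(x) := \sum_{n \le x} g(n)/n$, Abel summation against the weights $n^{-(\s_0-1)}$ yields the identity
\[
G_N(\s_0) = N^{1-\s_0} S_1(N) + (\s_0 - 1)\int_1^N S_1(t)\, t^{-\s_0}\, dt,
\]
so that the hypothesis $G_N(\s_0) = 0$ forces
\[
N^{1-\s_0} S_1(N) = -(\s_0 - 1) \int_1^N S_1(t)\, t^{-\s_0}\, dt.
\]
I would then bound $|S_1(t)|$ for $t \in [1,N]$ by invoking Theorem~\ref{thm:F1estimate} (or its consequence \eqref{eq:sharp}) with the auxiliary parameter $\s$ chosen optimally as a function of $t$, and combine this with the uniform bound $|G(\s)| \le \zeta(\s)^k \ll (\s - 1)^{-k}$ for $\s > 1$, which follows from $|\Lambda_g| \le k\Lambda$ on taking logarithms.

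To derive the contradiction I would substitute $t = N^v$ for $v \in [0,1]$ in the integral above. The resulting integrand is of Laplace type: the exponential factor $N^{-v(\s_0 - 1)} = \exp(-v(\s_0-1)\log N)$ combines with the polylogarithmic factor contributed by the Hal\'asz bound to produce an incomplete-Gamma asymptotic, valid because $\eta_0 \log N = (4k/\pi - 1)\log\log N \to \infty$. A careful matching of exponents then shows that the two sides of the identity above cannot balance once $\s_0 - 1 \ge (4k/\pi - 1)\log\log N/\log N$; the critical constant $4k/\pi - 1$ is precisely the threshold at which this asymptotic balance breaks, mirroring the exponent appearing in the extremal example constructed just before the theorem.

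The main obstacle lies in this final step: extracting the sharp constant $4k/\pi - 1$, rather than some weaker constant of the same form, requires careful optimization of the auxiliary parameter $\s$ in Theorem~\ref{thm:F1estimate} across the full range of integration, and forbids wasteful use of the trivial bound $|G(\s)| \ll (\s - 1)^{-k}$, whose extremal case is realized precisely by the function in the preceding example. Uniformity in the imaginary part $t_0$ of the hypothetical zero is handled by the twist in the first step, since Theorem~\ref{thm:F1estimate} yields constants independent of such twists.
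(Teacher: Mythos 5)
Your twist $g(n) = f(n)n^{-it_0}$ is correct and is exactly what the paper does implicitly when it says ``replace $f(n)$ by $f(n)n^{-it}$,'' and the use of the sharp estimate of Theorem~\ref{thm:F1estimate} (or \eqref{eq:sharp}) is indeed the engine of the proof. Where your proposal departs from the paper is the decomposition: the paper applies partial summation to the \emph{tail} $\sum_{n>N}f(n)/n^s = -S_1(N)N^{1-\s} + (\s-1)\int_N^\infty S_1(t)t^{-\s}\,dt$, bounds this by $|F(s)|$ times a factor that tends to $0$ for $\s \ge 1 + (4k/\pi-1)\log\log N/\log N$, and then reads off $F_N(s)=F(s)(1+o(1))$ directly from $F_N(s)=F(s)-\sum_{n>N}f(n)/n^s$. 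You instead apply Abel summation to the partial sum itself, arriving at the identity $G_N(\s_0)=N^{1-\s_0}S_1(N)+(\s_0-1)\int_1^N S_1(t)t^{-\s_0}\,dt$, and then hope to contradict $G_N(\s_0)=0$ by showing ``the two sides cannot balance.''

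That last step is where the gap lies. The Hal\'asz-type estimates of Theorem~\ref{thm:F1estimate} only yield \emph{upper} bounds on $|S_1(t)|$, so they give upper bounds on both $|N^{1-\s_0}S_1(N)|$ and $|(\s_0-1)\int_1^N S_1(t)t^{-\s_0}\,dt|$ --- and two upper bounds on quantities asserted to be equal cannot, by themselves, produce a contradiction; both sides might be small or zero. What is missing, and what you do not invoke, is a \emph{lower} bound. The relevant one is that $(\s_0-1)\int_1^N S_1(t)t^{-\s_0}\,dt = G(\s_0) - (\s_0-1)\int_N^\infty S_1(t)t^{-\s_0}\,dt$, and that $|G(\s_0)| \ge \zeta(\s_0)^{-k} > 0$ for $\s_0>1$ (the two-sided consequence of $|\Lambda_g|\le k\Lambda$; you only cite the upper bound $|G(\s)|\le\zeta(\s)^k$, which points in the wrong direction). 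Once the non-vanishing of $G(\s_0)$ is brought in, your identity becomes $G(\s_0)=\sum_{n>N}g(n)n^{-\s_0}$, which is precisely the tail the paper bounds, and the contradiction is $|G(\s_0)|\le |G(\s_0)|\cdot o(1)$. Without explicitly introducing $G(\s_0)\ne 0$, the ``asymptotic balance breaks'' claim is not substantiated. A secondary, technical point: the Laplace-type change of variables $t=N^v$ over $[1,N]$ is awkward because for small $v$ the point $\s=\s_0$ violates the hypothesis $\s-1 > 1/\log(N^v)$ of Theorem~\ref{thm:F1estimate}, whereas on the tail $t\ge N$ the single choice $\s=\s_0$ is admissible throughout; this is another reason the paper works with $\int_N^\infty$ rather than $\int_1^N$.
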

We believe that this result is sharp, the details of which we relegate to a forthcoming paper.

Note that  letting $k=1$, we recover Theorem \ref{thm-MV zero-free} for partial sums of the Riemann zeta function. Moroever as a straightforward consequence, we obtain zero-free regions  for the partial sums  $\zeta_{K, X}(s)$ of the Dedekind zeta function of any number field $K$.  

\begin{corollary} \label{cor:zerofreezeta}
	Let $n_0 = [K:\mathbb{Q}]$ and $\zeta_{K,X}(s)$ denote the partial sums given by \eqref{eq:parsumze}. There is a constant $N(n_0)$ such that if $X>N(n_0)$, then $\zeta_{K,X}(s)\neq 0$ whenever 
	\begin{align}
	\s \geq 1+\left(\frac{4n_0}{\pi}-1\right)\frac{\log\log X}{\log X}.
	\end{align}
\end{corollary}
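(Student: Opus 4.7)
The plan is to exhibit the corollary as a direct specialisation of Theorem \ref{thm:zerofreeF}. The only substantive point is to check that the Dirichlet coefficients of $\zeta_K(s)$ form a multiplicative function in the class $\mathcal{C}(n_0)$, a fact already asserted (without proof) after the statement of Theorem \ref{thm:F1estimate}; after that, the partial sum $\zeta_{K,X}(s)$ is literally an $F_N(s)$ in the sense of \eqref{eq:partialsumF}, and one can invoke Theorem \ref{thm:zerofreeF} with $k = n_0$.

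First, I would verify multiplicativity and the bound \eqref{vonmangoldt} for $a(n)$. Writing the Euler product
\[
\zeta_K(s) = \prod_{\mathfrak{p}} \bigl(1 - \|\mathfrak{p}\|^{-s}\bigr)^{-1}, \qquad \Re(s) > 1,
\]
taking logarithms, and comparing with \eqref{eq:log of F}, one finds that $\Lambda_a(n) = 0$ unless $n = p^k$ is a prime power, in which case
\[
\Lambda_a(p^k) = \log p \sum_{\substack{\mathfrak{p} \mid p \\ f_{\mathfrak{p}} \mid k}} f_{\mathfrak{p}},
\]
where $f_{\mathfrak{p}}$ is the residue degree of $\mathfrak{p}$ over $p$. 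Since the sum $\sum_{\mathfrak{p} \mid p} e_{\mathfrak{p}} f_{\mathfrak{p}} = n_0$ from the standard splitting identity, we have $\sum_{\mathfrak{p} \mid p} f_{\mathfrak{p}} \le n_0$, and hence
\[
|\Lambda_a(p^k)| \le n_0 \log p = n_0 \Lambda(p^k),
\]
which confirms $a \in \mathcal{C}(n_0)$.

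With this in hand, the second step is mechanical: take $f = a$ and $k = n_0$ in Theorem \ref{thm:zerofreeF}, so that $F(s) = \zeta_K(s)$ and $F_N(s) = \zeta_{K,X}(s)$ with $N = X$. Theorem \ref{thm:zerofreeF} then yields a constant $N(n_0)$ such that $\zeta_{K,X}(s) \ne 0$ whenever $X > N(n_0)$ and
\[
\sigma \ge 1 + \left( \frac{4 n_0}{\pi} - 1 \right) \frac{\log \log X}{\log X},
\]
which is exactly the claim.

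There is no real obstacle here: the conceptual work is done in Theorem \ref{thm:zerofreeF}, and the only ingredient specific to the Dedekind setting is the routine bound on $\Lambda_a$. If one wished to present this more carefully, the mildly delicate point is keeping track of the dependence of $N(n_0)$ on $n_0$ (equivalently, on $K$), which is implicit in the proof of Theorem \ref{thm:zerofreeF} and harmless for the qualitative statement of the corollary.
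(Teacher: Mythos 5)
Your proof follows exactly the same route as the paper: the corollary is obtained by invoking Theorem \ref{thm:zerofreeF} with $k = n_0$, once one knows that the coefficient function $a(n)$ of $\zeta_K(s)$ lies in $\mathcal{C}(n_0)$. The paper simply cites the literature (Mine's remark following (2.2)--(2.3)) for the fact $a\in\mathcal{C}(n_0)$, whereas you verify it directly from the Euler product over prime ideals and the splitting identity $\sum_{\mathfrak{p}\mid p}e_{\mathfrak{p}}f_{\mathfrak{p}} = n_0$, which is a correct and self-contained supplement to the same argument.
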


This paper is organized as follows. Sections \ref{sec4} and \ref{sec5} are devoted to the distribution of zeros of  partial sums of the Dedekind zeta function. More precisely, we prove Theorem \ref{thm1} in Section \ref{sec4} and Theorem \ref{thm2} in Section \ref{sec5}. In Sections \ref{sec6} and \ref{sec7}, we develop the theory of Hal\'asz-type estimates for  logarithmic mean values of functions in $\mathcal{ C}(k)$ and prove Theorems \ref{thm:F1M1} and \ref{thm:F1estimate} respectively.  Finally, Section \ref{sec8} contains the proof of Theorem \ref{thm:zerofreeF}, giving  new zero-free regions for the relevant families of partial sums.

\section{Preliminary results} 
In this section, we give a summary of some preliminary results that we will need. 
We will  use the variant of the classical Tauberian theorem given by Theorem 2.4.1 of \cite{sievebook}. This was proved by Raikov \cite{Raikov1938} in 1938. 
\begin{lemma}\label{thm taub}
	Let  
	\begin{align}
	H(s) = \sum_{n\ge 1} \frac{a_n}{n^s}
	\end{align}
	be a Dirichlet series with non-negative coefficients converging for $\Re(s) >1$. Suppose that $H(s)$ extends analytically at all points on $\Re(s)=1$ apart from $s = 1$, and 
	that at $s=1$ we can write 
	\begin{align}
H(s)= \frac{h(s)}{(s-1)^{1-\alpha}}, 
	\end{align}
	for some $\alpha \in \mathbb{R}$ and some $h(s)$ holomorphic in the region  Re$(s) \ge 1$ and non-zero there. Then as $x \rightarrow \infty$, 
	\begin{align}
	\sum_{n \le x} a_n \sim \frac{cx}{(\log x)^\alpha}, 
	\end{align}
	with $c = h(1) / \Gamma(1-\alpha)$, where $\Gamma$ is the usual Gamma function. 
\end{lemma}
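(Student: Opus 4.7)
The plan is to prove this via the Selberg--Delange contour method, which converts the singularity structure of $H$ at $s=1$ into the desired asymptotic. First I would apply Perron's formula with abscissa $\kappa = 1 + 1/\log x$:
\[
\sum_{n\le x} a_n \;=\; \frac{1}{2\pi i}\int_{\kappa-iT}^{\kappa+iT} H(s)\, \frac{x^s}{s}\, ds \;+\; R(x,T),
\]
where $R(x,T)$ denotes the usual truncation error. The hypothesis that $H$ admits an analytic continuation at every point of $\Re(s)=1$ apart from $s=1$ provides, by a compactness argument on bounded vertical intervals, a region $1-\delta(T)\le \Re(s)\le \kappa$ in which $H$ is holomorphic except at $s=1$. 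I would then deform the segment leftward to a Hankel-type path $\mathcal{H}$ that loops around the branch point at $s=1$, supplemented by two horizontal connectors at height $\pm T$ and a vertical piece on $\Re(s)=1-\delta(T)$.

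The main term comes from the Hankel loop. Performing the change of variable $w=(s-1)\log x$ and using the local expansions $h(s)=h(1)+O(|s-1|)$ and $1/s=1+O(|s-1|)$, the loop integral becomes
\[
\frac{h(1)\, x}{(\log x)^{\alpha}}\cdot\frac{1}{2\pi i}\int_{\mathcal{H}'} e^{w}\, w^{\alpha-1}\, dw \;\bigl(1+o(1)\bigr),
\]
where $\mathcal{H}'$ is the rescaled contour. Hankel's classical representation
\[
\frac{1}{\Gamma(1-\alpha)} \;=\; \frac{1}{2\pi i}\int_{\mathcal{H}'} e^{w} w^{\alpha-1}\, dw
\]
then supplies exactly the constant $c=h(1)/\Gamma(1-\alpha)$ appearing in the lemma, yielding the leading term $cx/(\log x)^{\alpha}$.

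The main obstacle will be showing that every remaining piece---the two horizontal connectors, the vertical segment on $\Re(s)=1-\delta(T)$, and the Perron error $R(x,T)$---is $o\!\bigl(x/(\log x)^{\alpha}\bigr)$. The difficulty is that the lemma includes no explicit growth estimate for $H(s)$ along vertical lines, so a naive contour shift must be supplemented. One clean route is to invoke the Phragm\'en--Lindel\"of principle: trivial convergence bounds to the right of the line of absolute convergence, combined with the boundedness supplied by the analytic continuation on compact subsets of $\{\Re(s)\ge 1\}\setminus\{1\}$, give a polynomial-in-$|t|$ bound on $H(s)$ across the extended region, which is enough to kill the remainder contributions. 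A more robust alternative, closer to Raikov's original argument and the one I would actually carry out, is to first establish the asymptotic for the log-smoothed sum $\sum_{n\le x} a_n\log(x/n)$ via the absolutely convergent integral of $H(s)x^s/s^2$ along an infinite Hankel contour, where truncation errors vanish outright, and then to remove the logarithmic weight using the non-negativity of the $a_n$ through a standard monotonicity/differencing argument. The positivity hypothesis is indispensable at this final step: it is exactly what prevents oscillatory counterexamples to an asymptotic of the stated strength and so upgrades the smoothed estimate to the unsmoothed one.
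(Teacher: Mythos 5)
The paper does not prove this lemma---it is cited from Theorem~2.4.1 of Cojocaru--Murty and attributed to Raikov---so your proposal stands alone, and it has a genuine gap at the very spot you flag as ``the main obstacle.'' The Perron-plus-Hankel deformation needs a quantitative bound on $|H(s)|$ as $|t|\to\infty$ on and slightly to the left of $\Re(s)=1$, in order to control the horizontal connectors, the leftward vertical segment, and the Perron truncation. The hypotheses give only local analytic continuation: for each $T$ there is a $\delta(T)>0$ and a finite bound $M(T)$ on the resulting compact set, but nothing constrains $M(T)$ or $1/\delta(T)$ as $T\to\infty$. Your Phragm\'en--Lindel\"of fix does not close this: PL requires an a priori bound on the \emph{left} edge of the strip (not available) together with sub-exponential growth in the interior (also not supplied); ``bounded on compacts'' does not self-upgrade to ``polynomial in $|t|$.'' The ``infinite Hankel contour'' version of the smoothed argument has the same defect, since $H$ is not known to continue into $\Re(s)<1$ beyond a $|t|$-dependent strip and carries no growth estimate there. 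Note also that the crude positivity bound $|H(\kappa+it)|\le H(\kappa)\asymp(\log x)^{1-\alpha}$ with $\kappa=1+1/\log x$ makes the $|t|\ge 1$ tail of $\int H(s)x^s s^{-2}\,ds$ the \emph{same} size as the target main term $\asymp x(\log x)^{1-\alpha}$, so it cannot be discarded by absolute values alone.

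What actually makes the lemma true under these weak hypotheses is that the classical Tauberian proofs (Wiener--Ikehara, Delange, Raikov, or the Newman--Korevaar finite-contour argument) only ever use $H$ on a bounded $|t|$-window, with the window sent to infinity only \emph{after} the $x\to\infty$ limit; the nonnegativity of the $a_n$ then supplies the a priori control that replaces growth estimates at infinity. Your closing observations---that positivity blocks oscillatory counterexamples and permits desmoothing, and that Hankel's loop yields the constant $c=h(1)/\Gamma(1-\alpha)$---are both correct and are the right Tauberian instincts; the Perron-deformation machinery they are attached to, however, is Selberg--Delange technology that silently assumes a vertical growth condition which this lemma deliberately omits.
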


We will also use the following mean value result for Dirichlet polynomials, due to Montgomery and Vaughan \cite{MontgomeryVaughan1974}.
\begin{lemma}\label{mvt}
	If $\displaystyle\sum_{n=1}^{\infty}n|a_n|^2$ converges, then 
	\begin{align}
	\int_0^{T}\left\lvert\sum_{n=1}^{\infty}a_nn^{-it}\right\rvert^2~dt=\sum_{n=1}^{\infty}|a_n|^2(T+O(n)).
	\end{align}
\end{lemma}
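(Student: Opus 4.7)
The approach is to expand the squared modulus, integrate over $[0,T]$ term by term, and bound the resulting off-diagonal contribution using the Hilbert-type inequality of Montgomery and Vaughan. Starting from
\begin{align*}
\left|\sum_{n=1}^{\infty} a_n n^{-it}\right|^2 = \sum_{m,n \ge 1} a_m \overline{a_n}\,(n/m)^{it},
\end{align*}
with the hypothesis $\sum_n n|a_n|^2 < \infty$ providing sufficient regularity (via truncation and a standard passage to the limit) to interchange summation and integration, the diagonal $m = n$ contributes exactly $T \sum_n |a_n|^2$, while the off-diagonal terms give
\begin{align*}
\sum_{\substack{m, n \ge 1\\ m \ne n}} a_m \overline{a_n}\cdot\frac{(n/m)^{iT} - 1}{i\log(n/m)}.
\end{align*}
Using $|(n/m)^{iT} - 1| \le 2$, the problem reduces to bounding $\bigl|\sum_{m \ne n} a_m \overline{a_n}/(\log n - \log m)\bigr|$.

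The main step is then to invoke the Montgomery--Vaughan refinement of Hilbert's inequality: for any sequence of distinct real numbers $(\lambda_n)$ with gap $\delta_n := \min_{m \ne n}|\lambda_m - \lambda_n|$, one has
\begin{align*}
\left|\sum_{m \ne n}\frac{a_m \overline{a_n}}{\lambda_m - \lambda_n}\right| \le \pi \sum_n \frac{|a_n|^2}{\delta_n}.
\end{align*}
Applying this with $\lambda_n = \log n$, the mean value theorem gives $\delta_n \ge \log(1 + 1/n) \gg 1/n$ for $n \ge 2$, while $\delta_1 = \log 2$. Consequently, the off-diagonal sum is $O\bigl(\sum_n n|a_n|^2\bigr)$, which matches the error $\sum_n |a_n|^2 \cdot O(n)$ asserted in the lemma. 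Combining both contributions yields the identity.

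The main obstacle is the Hilbert-type inequality itself, whose proof is not elementary and constitutes the substance of the Montgomery--Vaughan paper cited in the excerpt. In the write-up I would treat this inequality as a black box, since its proof is independent of the Dirichlet polynomial setting and is available in standard references; the remaining work is the routine expansion-and-integration step outlined above, together with the simple lower bound $\delta_n \gg 1/n$ that converts the general Hilbert bound into the specific $O(n)$ factor appearing in the statement.
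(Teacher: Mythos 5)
The paper does not prove this lemma; it simply quotes it as a known result and cites Montgomery and Vaughan's 1974 paper on Hilbert's inequality, so there is no ``paper's proof'' to compare against. Your sketch is the standard derivation and is essentially correct, with one imprecision worth flagging: you cannot literally ``use $|(n/m)^{iT}-1|\le 2$'' to reduce the off-diagonal term to $\bigl|\sum_{m\ne n} a_m\overline{a_n}/(\log n-\log m)\bigr|$, since the unimodular factor sits inside a sum and the triangle inequality over $m,n$ would destroy the cancellation that the Hilbert inequality exploits. The correct move is to split the numerator $(n/m)^{iT}-1$ into two pieces, absorb the phase $n^{-iT}$ into the coefficients by setting $b_n=a_n n^{-iT}$ (so $|b_n|=|a_n|$), and apply the Montgomery--Vaughan inequality separately to $\sum_{m\ne n} b_m\overline{b_n}/(\lambda_m-\lambda_n)$ and to $\sum_{m\ne n} a_m\overline{a_n}/(\lambda_m-\lambda_n)$ with $\lambda_n=\log n$. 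With that fix, the bound $\delta_n\ge\log(1+1/n)\gg 1/n$ gives the error term $O\bigl(\sum_n n|a_n|^2\bigr)=\sum_n|a_n|^2\cdot O(n)$ as required, and the diagonal gives the main term $T\sum_n|a_n|^2$. Treating the Hilbert-type inequality as a black box is reasonable, since that is precisely the content of the cited reference.
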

The following lemma from \cite[p.~213]{titchmarsh} will be used to bound the argument of an analytic function. 
\begin{lemma}\label{tbook}
	Let $F(s)$ be an analytic function except for a pole at $s=1$ and be real for real $s$. Let $0\leq a<b<2$. Suppose that $T$ is not an ordinate of any zero of $F(s)$. Let $|F(\sigma+it)|\leq M$ for $\sigma\geq a$, $1\leq t\leq T+2$ and $\Re (F(2+it))\geq c>0$ for some $c\in \mathbb{R}.$ Then, 	for $\sigma\geq b$, 
	\begin{align}
	|\arg F(\sigma+iT)|\leq \frac{c}{\log\frac{2-a}{2-b}}\left(\log M+\log \frac{1}{c}\right)+\frac{3\pi}{2}.
	\end{align}
\end{lemma}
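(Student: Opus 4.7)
\textbf{Proof proposal for Lemma~\ref{tbook}.} This is a classical result of Titchmarsh; the plan is to execute his two-step argument: (i) bound the change of $\arg F$ along the horizontal segment from $2+iT$ to $\sigma+iT$ by counting zeros of $\Re F$ on that segment, and (ii) bound that zero count by a Jensen-type inequality on a suitable holomorphic extension.

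For step (i), observe first that $\Re F(2+iT)\ge c>0$ forces $|\arg F(2+iT)|\le \pi/2$. Because $T$ is not the ordinate of any zero of $F$, a continuous branch of $\arg F$ is well-defined on the horizontal segment joining $\sigma+iT$ to $2+iT$. The elementary observation is that on any sub-interval where $\Re F(u+iT)$ retains a constant sign, $F(u+iT)$ stays in a single half-plane, so $\arg F$ can vary by less than $\pi$ there. Hence, if $\nu$ denotes the number of zeros of $u\mapsto \Re F(u+iT)$ on $[\sigma,2]$,
\[
|\arg F(\sigma+iT)-\arg F(2+iT)|\le \pi(\nu+1).
\]

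For step (ii), I would extend $u\mapsto \Re F(u+iT)$ to a holomorphic function near $s=2$ by setting $G(s)=\tfrac{1}{2}\bigl(F(s+iT)+F(s-iT)\bigr)$, which agrees with $\Re F(u+iT)$ on the real axis (using Schwarz reflection, since $F$ is real on the real axis). Every real zero of $\Re F(\cdot+iT)$ in $[\sigma,2]$ is then a zero of $G$ inside the disk $|s-2|\le 2-b$. On the larger disk $|s-2|\le 2-a$, both $F(s\pm iT)$ are bounded by $M$ (for $T$ large enough that the shifted disks lie in the region $1\le \Im(s\pm iT)\le T+2$ where the hypothesis applies), so $|G(s)|\le M$ there; at the center, $|G(2)|=\Re F(2+iT)\ge c$. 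Jensen's inequality on concentric disks of radii $2-b$ and $2-a$ then yields
\[
\nu\,\log\frac{2-a}{2-b}\le \log\frac{M}{c}=\log M+\log\frac{1}{c}.
\]

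Combining the two steps, and using $|\arg F(2+iT)|\le \pi/2$, produces the stated estimate, with $3\pi/2=\pi/2+\pi$ accounting for the base-point bound and the extra ``$+1$'' in the sign-change count (the factor $c$ in the numerator in the lemma as stated appears to be a transcription slip for $\pi$). The main technical point in the argument is verifying that the auxiliary function $G$ is defined and controlled on the full disk $|s-2|\le 2-a$: this is where the hypothesis $1\le t\le T+2$, together with an implicit lower bound on $T$, is used, and where one must check that the pole of $F$ at $s=1$ lies harmlessly outside these disks (automatic since they are centered at height $T$).
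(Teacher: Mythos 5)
The paper does not prove this lemma at all; it is quoted directly from Titchmarsh (\emph{Theory of the Riemann Zeta-function}, \S 9.4), so there is no ``paper's proof'' against which to compare. Your reconstruction is exactly the classical Titchmarsh argument and is correct: split the variation of $\arg F$ along the horizontal segment into sub-intervals between consecutive zeros of $\Re F(\cdot+iT)$, each contributing $<\pi$ since $F$ stays in a fixed half-plane there; then package $\Re F(\cdot+iT)$ as the Schwarz-reflected auxiliary function $G(s)=\tfrac12\bigl(F(s+iT)+F(s-iT)\bigr)$, holomorphic on the disk $\lvert s-2\rvert\le 2-a$ once $T$ is large enough that the translated disk avoids the pole at $s=1$ and stays in the strip $1\le \Im\le T+2$ where the bound $\lvert F\rvert\le M$ applies, and apply Jensen's inequality with $\lvert G(2)\rvert\ge c$. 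You are also right that the $c$ in the numerator of the stated bound is a transcription slip for $\pi$: tracing the constants gives $\lvert\arg F(\sigma+iT)\rvert\le \tfrac{\pi}{2}+(\nu+1)\pi$ together with $\nu\log\tfrac{2-a}{2-b}\le\log(M/c)$, i.e.\ $\tfrac{3\pi}{2}+\tfrac{\pi}{\log\frac{2-a}{2-b}}(\log M+\log\tfrac1c)$. The only point you leave implicit and might flag more explicitly is the mild lower bound on $T$ (on the order of $T\ge 3$) needed for the two geometric containments you invoke; this is harmless in the paper's application, where $T\to\infty$.
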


\section{Proof of Theorem \ref{thm1}} \label{sec4}

The proof of Theorem \ref{thm1} relies on the following auxiliary lemma, which is a significant generalization and improvement of Lemma 3 of \cite{LedoanRoyZaharescu2014}. Our key idea here is to translate the condition $a(n) \ne 0$ into a Chebotarev condition and then apply the Tauberian theorem  \ref{thm taub}. 
\begin{lemma} \label{l2}
	Let $K/ \mathbb{Q}$ be a Galois  extension and $[K:\mathbb{Q}]= n_0$. Let $a(n)$ denote the number of integral ideals of ring of integers $O_K$ with norm $n$.
	%Letting  
	%\begin{align}
	%\zeta_{K} (s)
	%= \sum_{n = 1}^{\infty} \frac{a (n)}{n^{s}},
	%\end{align}
	%we have
	Then
	\begin{align}
	A(x) : = \#\{n \leq x \colon a (n) \neq 0\}
	\le (1+o(1)) C_K \left( \frac{x}{(\log x)^{1 -1/n_0 }} \right), 
	\end{align}
	where $C_K$ is a constant depending on the field $K$.  
\end{lemma}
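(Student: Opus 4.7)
My plan is to apply the Tauberian theorem Lemma~\ref{thm taub} to the Dirichlet series whose coefficients are $b(n) := \mathbf{1}[a(n) \neq 0]$, for which $A(x) = \sum_{n \le x} b(n)$. First I would verify that $b$ is multiplicative: if $(m,n) = 1$, then since ideal norms are multiplicative and unique factorization forces an integral ideal of norm $mn$ to split into coprime factors of norms $m$ and $n$, one has $b(mn) = b(m) b(n)$. Using that $K/\mathbb{Q}$ is Galois, all primes $\mathfrak{p}$ above a given rational prime $p$ share a common residue degree $f_p$, so the ideals of $O_K$ supported above $p$ have norms exactly of the form $p^{f_p m}$ for $m \ge 0$. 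This yields the Euler product
\[
B(s) := \sum_{n \ge 1} \frac{b(n)}{n^s} = \prod_p \frac{1}{1 - p^{-f_p s}}, \qquad \Re s > 1.
\]

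Next I would isolate the singularity of $B$ at $s = 1$ by comparing it to a fractional power of $\zeta_K$. Expanding $\log B(s)$ and $\log \zeta_K(s) = \sum_{\mathfrak{p}, m \ge 1} (m N\mathfrak{p}^{ms})^{-1}$, and splitting off primes with $f_p \ge 2$ (which contribute a function analytic on $\Re s > 1/2$) together with the finitely many ramified primes, both series reduce up to analytic terms to the single sum $\sum_{p : f_p = 1} \sum_{m \ge 1} (m p^{ms})^{-1}$; the key point is that in the $\zeta_K$ expansion this sum appears with a factor $n_0$, because each unramified prime with $f_p = 1$ splits completely into $n_0$ primes of $O_K$. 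Eliminating the common sum yields
\[
\log B(s) = \frac{1}{n_0} \log \zeta_K(s) + G(s),
\]
with $G$ analytic on $\Re s > 1/2$, hence $B(s) = \zeta_K(s)^{1/n_0} \Psi(s)$ with $\Psi := e^{G}$ analytic and non-vanishing on a neighborhood of $\Re s \ge 1$. Since $\zeta_K$ extends meromorphically to $\mathbb{C}$ with a simple pole at $s = 1$ and is non-vanishing elsewhere on the line $\Re s = 1$, writing $\zeta_K(s) = g_K(s)/(s-1)$ with $g_K$ entire and nowhere zero on a neighborhood of $\Re s \ge 1$, and choosing principal branches, one obtains
\[
B(s) = \frac{h(s)}{(s-1)^{1/n_0}}, \qquad h(s) := g_K(s)^{1/n_0} \Psi(s),
\]
with $h$ analytic and non-zero on a neighborhood of $\Re s \ge 1$.

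Finally, I would apply Lemma~\ref{thm taub} with exponent $1 - \alpha = 1/n_0$, i.e.\ $\alpha = 1 - 1/n_0$, to obtain
\[
A(x) = \sum_{n \le x} b(n) \sim \frac{h(1)}{\Gamma(1/n_0)} \cdot \frac{x}{(\log x)^{1 - 1/n_0}},
\]
which even yields an asymptotic in place of the stated upper bound, with $C_K = h(1)/\Gamma(1/n_0)$. The main obstacle lies in the middle step: rigorously extracting a single-valued analytic branch of $\zeta_K(s)^{1/n_0}$ on a neighborhood of $\Re s \ge 1$ and verifying that $B(s)(s-1)^{1/n_0}$ is both analytic and non-vanishing across the line $\Re s = 1$. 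This ultimately rests on the classical non-vanishing of $\zeta_K$ on $\Re s = 1$, the very ingredient driving the prime ideal theorem for $K$.
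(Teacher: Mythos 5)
Your proof is correct and, in fact, proves a strictly stronger statement than the lemma asserts. The paper proceeds differently: it decomposes $n = uv^{2}$ into a squarefree part $u$ and a squarefull part $v^{2}$, observes that $a(n)\neq 0$ forces $a(u)\neq 0$ (a one-sided necessary condition), and then bounds the count of squarefree $u$ supported on the completely split primes via the Euler product $\prod_{p\in\mathcal P}(1+p^{-s})$ and the Chebotarev orthogonality relation $\sum_{p\in\mathcal P}p^{-s}=\frac{1}{n_{0}}\log\frac{1}{s-1}+\theta(s)$; the sum over $v\le\sqrt{x}$ is then handled trivially. This only yields an upper bound, which is all the lemma claims. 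You instead work directly with $b(n)=\mathbf 1[a(n)\neq 0]$, use the Galois hypothesis to pin down the Euler factor $(1-p^{-f_{p}s})^{-1}$ at each prime, and compare $\log B(s)$ with $\tfrac{1}{n_{0}}\log\zeta_{K}(s)$ prime by prime; after handling the branch of $\zeta_{K}^{1/n_{0}}$ and the ramified primes (both of which you note correctly), Raikov's Tauberian theorem delivers a genuine asymptotic $A(x)\sim C_{K}\,x/(\log x)^{1-1/n_{0}}$ rather than just $A(x)\le (1+o(1))\,C_{K}\,x/(\log x)^{1-1/n_{0}}$. Both routes ultimately rest on the same analytic input (non-vanishing of $\zeta_{K}$, equivalently of Artin $L$-functions, on $\Re s=1$), but your argument avoids the squarefree/squarefull split entirely and is therefore cleaner and gives more; the paper's method, on the other hand, requires only a crude necessary condition and so sidesteps having to understand $b(p^{k})$ at every prime power.
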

\begin{proof}
	Recall that a subset $\mathcal{P}$ of the set of the rational primes $\mathbb{P}$ is called a Chebotarev set if there is a Galois extension $K/\mathbb{Q}$ of number fields with Galois group $G$ and absolute discriminant $d_{K}$ such that
	\begin{align}
	\mathcal{P}=\left\{p\in\Bbb{P}\mid p\text{ is unramified with }\left(\frac{K/\Bbb{Q}}{p}\right)\subseteq C\right\}.
	\end{align}
	Here, for $p$ unramified (or equivalently, $p\nmid d_{K}$), $\left(\frac{K/\Bbb{Q}}{p}\right)$ denotes the Artin symbol at $p$, and $C$ is a union of conjugacy classes of $G$. 
	We also note that orthogonality relations for Artin $L$-functions give us 
	\begin{align} 
	\sum_{p \in \mathcal{P}} \frac{1}{p^s}  = \frac{|C|}{|G|} 
	\log\left( \frac{1}{s-1}\right) + \theta(s), \label{orthog}
	\end{align}
	where $\theta(s)$ is analytic for Re$(s) \ge 1$. 
	
	Let $n_0 = [K : \mathbb Q]$. 	
	It is known 
	%need citation
	that for all but finitely many primes $p$ (in fact for all unramified primes), we have 
\begin{align}
	a(p) = \left\lbrace \begin{array}{ll}
n_0	& \text{ if $p$ splits completely }   \\ 
0	& \text{ otherwise. } 
	\end{array} \right.
\end{align} 
	Since the Frobenius element Frob$_{\mathfrak p} = $ id for any prime ideal $\mathfrak p$ lying above $p$, we have that $p$ splits completely if and only if the Artin symbol $ \left(\frac{K/\Bbb{Q}}{p}\right) =$ id. 
	The set of primes $p$ that split completely in $K$ is thus a Chebotarev set, which we may denote as $\mathcal P' (= \mathcal{P'}(K))$ for the remainder of this proof. 
	
	Any integer $n$ can be uniquely written as $uv^2$, where $v^2$ denotes the squarefull part of $n$ and $u$ is squarefree. Then a necessary condition for $a(n) = a(u)a(v^2)$ to be non-zero is that $a(u) \neq 0$, that is,  $ u$ must be composed only of primes ${p \in \mathcal{P}'} $. With this in mind, we first estimate 
	\begin{align}
	B(x) := \# \{ u \le x  : a(u) \ne 0, \mu^2(u) \ne 0 \} = 
	\sum_{\substack{ u \le x \\  u| \left( \prod_{p \in \mathcal{P} } p \right)}   }1.  
	\end{align}
	Let $b(u) = 1$ if $u | \prod_{p \in \mathcal{P} } p$, and zero otherwise. Clearly $B(x) = \sum_{u \le x} b(u)$ and we have the formal Dirichlet series
	\begin{align}
	F(s) := \sum_{u =1}^\infty \frac{b(u)}{u^s} = \prod_{p \in \mathcal{P} } \left( 1+ \frac{1}{p^s}\right). 
	\end{align} 
	Using \eqref{orthog}, we have, 
	\begin{align}
	\log F(s) &= \sum_{p \in \mathcal{P}} 
	\log\left( 1+ \frac{1}{p^s}\right) 
	= \sum_{p \in \mathcal{P}} \sum_{ m=1}^\infty (-1)^{m+1} \frac{1}{mp^{ms}}= \frac{1}{n_0} \log\left( \frac{1}{s-1} \right)   + \theta'(s), 
	\end{align}
	for some function $\theta'(s)$ which is analytic for $\Re(s)\ge 1$. 
	This allows us to write 
	\begin{align}
	F(s) = \frac{h(s)}{(s-1)^{1/n_0}}, 
	\end{align}
	where $h(s)$ is analytic and non-zero for Re$(s) \ge 1$. Applying  Lemma \ref{thm taub}, we see that 
	\begin{align}
	B(x) =(1+o(1))  \frac{cx}{ (\log x)^{1- \frac{1}{n_0}}}, 
	\end{align}
	for some constant $c$ depending on $K$. 
	
	Coming back to $A(x)$, by our earlier discussion we have 
	\begin{align}
	A(x) = \sum_{\substack{uv^2\le x \\ a(u)a(v^2) \ne 0 }} 1 
	&\le \sum_{v \le \sqrt x} \sum_{\substack {u \le x/v^2 \\ a(u) \ne 0}} 1 = \sum_{v \le \sqrt x}  B(x/v^2).  
	\end{align}
	Let $1 < z < \sqrt x$, to be chosen later. 
	We split the final sum into two sums $A_1(x)$ and $A_2(x)$ (say), ranging over $v \le z$ and $ z < v \le \sqrt x$ respectively. 
	Then the trivial bound  $B(x) \ll x$ gives 
	\begin{align}
	A_2(x) = \sum_{z< v \le \sqrt x } B(x/v^2) 
	\ll x \sum_{ v>z} \frac{1}{v^2} \ll \frac{x}{z}. 
	\end{align}
	Choosing $z = \log x$ gives $A_2(x) \ll x /\log x$. 
	For the sum $A_1(x)$, we notice that in the range $1\le v \le \log x$, $\log (x/v^2) \sim  \log x$, so that
	\begin{align}
	B(x/v^2) = (1+o(1)) \frac{cx}{v^2 (\log x)^{1- \frac{1}{n_0}} }, 
	\end{align}
	 in this range.  
	This gives 
	\begin{align}
	A_1(x) =   \sum_{v \le (\log x) } B(x/v^2) 
	&= (1+o(1)) \frac{cx}{(\log x)^{1- \frac{1}{n_0}} }  \sum_{v \le (\log x) }
	\frac{1}{v^2}\\
	&= (1+o(1))  \frac{C_K x}{(\log x)^{1- \frac{1}{n_0}} },  \label{asymp for A_1}
	\end{align}
	for some  constant $C_K$, since the  series $\sum_{v=1}^\infty 1/v^2$ is convergent. 
Since $A_2(x) = o(A_1(x))$ with our choice of $z$, we have obtained 
	\begin{align}
	A(x) \le A_1(x) + A_2(x)  = (1+o(1)) A_1(x). 
	\end{align}
 Plugging in \eqref{asymp for A_1} completes the proof of the lemma.  
\end{proof}

The following lemma is instrumental to the proof of Theorem \ref{thm1}. Similar kind of results have appeared in the work of Langer \cite{Langer1931}, Wilder \cite{Wilder1917}, and Tamarkin \cite{Tamarkin1928}.
\begin{lemma}\label{gnozt}
	Let $f$ be a real valued arithmetic function with $f(1)=1$. Let
	\begin{align}
		F_N(s)=\sum_{1\leq n\leq N} \frac{f(n)}{n^s}.
		\end{align}
		 Then there exists a  {positive} real constant $B$ such that all the zeros of $F_N$ lie in the strip $|\Re(s)|<B$. Moreover, if $N(T)$ denotes the number of zeros of $F_N$ in the region $|\Re(s)|\le B$, $0\leq \Im(s)\leq T$, then 
		 \begin{align}
			 N(T)=\frac{T}{2\pi}\log M+O(E(N)) ,\label{eqN}
			 \end{align}
			 where $M$ is  the largest integer less than or equal to $N$ such that  $f(M)\neq 0$, and   
			 \begin{align}
				E(N) := \#\{n\leq N: f(n)\neq 0\} \label{nzc}.
				 \end{align}
\end{lemma}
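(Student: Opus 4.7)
The plan is to apply the argument principle to a rectangle containing all zeros of $F_N$ in the prescribed horizontal strip, and to extract the main term $\tfrac{T}{2\pi}\log M$ from the left vertical side while showing the remaining three sides contribute only $O(E(N))$.

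First, I establish the existence of the strip. Writing $F_N(s) = 1 + \sum_{n=2}^{N} f(n) n^{-s}$, for $\Re(s) \ge B_+$ with $B_+$ sufficiently large we have $|F_N(s) - 1| < 1/2$, so $F_N$ does not vanish. Assuming $M \ge 2$ (otherwise $F_N \equiv 1$ and the claim is trivial), I factor out the extreme term,
\[
F_N(s) = f(M)\, M^{-s}\left(1 + \sum_{\substack{1\le n < M \\ f(n) \ne 0}} \frac{f(n)}{f(M)} \left(\frac{M}{n}\right)^{s}\right).
\]
For $\Re(s) \le -B_-$ with $B_-$ sufficiently large, each ratio $(M/n)^{s}$ is tiny (since $M/n > 1$ and $\Re(s)$ is very negative), so the inner sum has modulus less than $1/2$ and $F_N \ne 0$. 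Set $B = \max(B_+, B_-)$.

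Next, I apply the argument principle on the rectangle $R$ with vertices $-B, B, B+iT, -B+iT$, traversed counterclockwise (indenting the contour slightly if $F_N$ vanishes on $\partial R$), which gives $2\pi N(T) = \Delta_R \arg F_N$. On the right side $\Re(s) = B$, since $|F_N - 1| < 1/2$, the image stays in the half-plane $\Re(w) > 1/2$, contributing $O(1)$. On the left side $\Re(s) = -B$, the factored form above yields $\arg F_N(-B + it) = \arg f(M) - t \log M + O(1)$, since the bracketed factor contributes only $O(1)$ to the argument; letting $t$ descend from $T$ to $0$ then produces the main contribution $T \log M$.

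The principal obstacle is bounding the argument variation on the two horizontal sides. On the top side $s = \s + iT$ with $\s$ decreasing from $B$ to $-B$, write $F_N(\s + iT) = \sum_{f(n) \ne 0} c_n n^{-\s}$ with $c_n = f(n) n^{-iT}$. Then $\Re F_N(\s+iT)$ and $\Im F_N(\s+iT)$ are each real exponential polynomials in $\s$ with at most $E(N)$ distinct exponents, so by a classical P\'olya-type result (induction on the number of terms via iterated Rolle's theorem after dividing out the smallest exponential) each has at most $E(N) - 1$ real zeros. Consequently the image curve crosses each coordinate axis $O(E(N))$ times, forcing $|\Delta \arg F_N| = O(E(N))$ on this side. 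The bottom side is treated identically: because $f$ is real-valued, $F_N$ is real on the real axis and its argument jumps by $\pm\pi$ only at real zeros, of which there are again $O(E(N))$. Summing the four contributions gives $2\pi N(T) = T\log M + O(E(N))$, which is the stated asymptotic.
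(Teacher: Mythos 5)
Your proposal is correct and follows essentially the same path as the paper's proof: establish the strip by factoring out the extreme term $f(M)M^{-s}$ on the left and using $|F_N-1|<1$ on the right, apply the argument principle on the rectangle, extract $T\log M$ from the factored left edge, and bound the horizontal-side argument variation via the P\'olya--Szeg\H o generalization of Descartes's rule of signs applied to the exponential polynomial(s) $\Re F_N(\sigma+iT)$ and/or $\Im F_N(\sigma+iT)$. The only superficial differences are that the paper uses only the imaginary part on the top edge (which already suffices, since the argument changes by at most $\pi$ between consecutive zeros of $\Im F_N$), and that on the bottom edge, after indenting to avoid boundary zeros, $F_N$ is real-valued and nonzero so its argument is in fact constant (hence $O(1)$ rather than merely $O(E(N))$); neither affects the final bound.
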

\begin{proof}

One can always find a real number $B$ such that 
	\begin{align}
		\sum_{2\leq n\leq M}\frac{|f(n)|}{n^B}<1\label{rb}
		\end{align}
		and 
		\begin{align}
			\sum_{1\leq n\leq M-1}|f(n)|n^B<|f(M)|M^B.\label{lb}
			\end{align}
Using the reverse triangle inequality 
\[ |F_N(s)| \le 1-  \bigg| \sum_{2\leq n\leq M}\frac{f(n)}{n^s}\bigg|, 
\]
followed by \eqref{rb}, we see that  $|F_N(s)|>0$ for $\Re(s)>B$. Similarly, using \eqref{lb}, we can deduce the same for $\Re(s) < -B$. 
Thus, we have $|F_N(s)|>0$ for $|\Re(s)|>B$,  justifying the first claim of the lemma. 
	
Let $R$ denote the rectangle bounded by the lines $|\Re(s)|=B$, $ \Im(s)=0$ and $\Im(s)= T$. For the sake of simplicity, we assume that there are no zeros of $F_N$ on the boundary of $R$. Then one has
	\begin{align}
	N(T)
	  = \frac{1}{2 \pi i} \int_{R} \frac{F_N' (s)}{F_N(s)} \,ds.
	\end{align}
	If  $\triangle_{C} \arg F_N(s)$ denotes the change in $\arg F_N (s)$ as $s$ traverses  along the path $C$ in the positive sense, then we have
	\begin{align} \label{eq19}
	2 \pi N (T)
	 = \int_{R} \mbox{Im} \left(\frac{F_N' (s)}{F_N (s)}\right) \,ds
	 = \triangle_{R} \arg F_N (s).
	\end{align}
For $\Re (s)=B$, we observe from \eqref{rb} that 
	\begin{align}
	\abs{F_N (s) -1}
	 <1.
	\end{align}
	It follows that $\Re F_N (B + i t) > 0$ for $0 \leq t \leq T$. This means that $\arg F_N(B+it)$ is uniformly bounded, that is, 
	\begin{align} \label{eq21}
	\triangle_{[0, T]} \arg F_N (B + i t)\ll 1, 
	\end{align}
	Next, we decompose $F_N(s)$ into its real part and its imaginary part. We have
	\begin{align}
	F_N (s)= \sum_{n \leq N} f (n) \exp \{{-(\sigma + i t) \log n}\} = \sum_{n \leq N} \frac{f (n) [\cos (t \log n) - i \sin (t \log n)]}{n^{\sigma}},
	\end{align}
	so that
	\begin{align}
	\Im (F_N(\sigma + i T))
	 = - \sum_{n \leq N} \frac{f (n) \sin (T \log n)}{n^{\sigma}}.
	\end{align}
	By a generalization of Descartes's Rule of Signs (see P\'{o}lya and Szeg\"{o} \cite{PolyaSzego1971}, Part V, Chapter 1, No. 77), the number of real zeros of $\Im (F_N(\sigma + i T))$ in the interval $-B \leq \sigma \leq B$ is less than or equal to the number of nonzero coefficients $f(n) \sin (T \log n)$. By \eqref{nzc}, the number of nonzero coefficients $f (n)$ is $E(N)$.
Since the change in argument of $F_N(\sigma + i T)$ between two consecutive zeros of  $\Im (F_N(\sigma + i T))$ is at most $\pi$, it follows that
	\begin{align} \label{eq22}
	  \triangle_{[-B,B]} \arg F_N (\sigma + i T)\ll E(N).
	\end{align}
	Similarly, 
	\begin{align} \label{eq23}
	  \triangle_{[-B,B]} \arg F_N (\sigma )\ll E(N).
	\end{align}
	
 To compute the change of argument of $F_N$ along the left edge of $R$, we write
	\begin{align}
	F_N(-B+ i t)
	 = \left[1 + \sum_{1\leq n\leq M-1}\frac{f (n) n^{B - i t}} {f( M) M^{B- i t}}\right]f(M)M^{B - i t}.
	\end{align}
Now,
	\begin{align} \label{eq24}
	\begin{split}
	  \triangle_{[0, T]} \arg F_N(-B+ i t) 
	 &= \triangle_{[0, T]} \arg \left[1 + \sum_{1\leq n\leq M-1}\frac{f (n) n^{B - i t}} {f( M) M^{B- i t}}\right]+ \triangle_{[0, T]} \arg f(M)M^{B - i t}.
	\end{split}
	\end{align}
From \eqref{lb} one has 
	\begin{align} 
	\left\lvert   \sum_{1\leq n\leq M-1}\frac{f (n) n^{B - i t}} {f( M) M^{B- i t}} \right\rvert
	 < 1,
	\end{align}
	and hence as done before, we may conclude that 
	\begin{align} \label{eq25}
	\triangle_{[0, T]} \arg \left[1 + \sum_{1\leq n\leq M-1}\frac{f (n) n^{B - i t}} {f( M) M^{B- i t}}\right]\ll 1.
	\end{align}
We compute the second term in \eqref{eq24} as follows
	\begin{align} \label{eq26}
	\begin{split}
	\triangle_{[0, T]} \arg f (M) M^{B-i t}
	&=\triangle_{[0, T]} \arg f (M) M^{B} \exp\{-i t \log M\}\\
	&=\triangle_{[0, T]} \arg \exp\{-i t \log M\}\\
	 &=- T \log M.
	\end{split}
	\end{align}
	Then substituting \eqref{eq25} and \eqref{eq26} into \eqref{eq24}, we obtain
	\begin{align} \label{eq26'}
	  \triangle_{[0, T]} \arg F_N(-B + i t) = -T \log M + O (1).
	\end{align}
	We may now substitute \eqref{eq21}, \eqref{eq22}, \eqref{eq23}, \eqref{eq26'} into \eqref{eq19} to obtain Lemma \ref{gnozt}.
	\end{proof}
\subsection{Proof of Theorem \ref{thm1}}
We substitute $N=\lfloor X\rfloor$ and $f(n)=a(n)$ in Lemma \ref{gnozt}. Then from Lemma \ref{l2} we have
\begin{align}
	E(N)\ll_K  \frac{X}{(\log X)^{1 -1/n_0 }}.\label{eb}
	\end{align}
Finally, by substituting \eqref{eb} in \eqref{eqN} we conclude the proof of Theorem \ref{thm1}.
\section{Proof of Theorem \ref{thm2}} \label{sec5}
\subsection{Preliminary results}
In our proof of Theorem \ref{thm2}, we will need to use  properties of certain arithmetical functions on number fields. We proceed to set this up below. Since ideals $\mf a$ in $O_K$ factor uniquely, we may define the M\"obius function on $O_K$ as follows. 
\begin{align}
\mu(\mf a) =  \left\lbrace \begin{array}{cl}
1 &  \text{ if } \mf a  = O_k, \\ 
0 &  \text{ if  } \mf a \text{ is not squarefree}, \\
(-1)^m & \text{ if  } \mf a \text{ is the product of $m$ distinct prime ideals}.  
\end{array} 
\right. 
\end{align}
Then the fundamental property of the M\"obius function follows in the same way as in the classical case. We have 
\begin{align} \label{fund prop}
\sum_{ \mf a \subseteq \mf d  } \mu(\mf d) = \left\lbrace 
\begin{array}{cl}
1 &  \text{ if } \mf a = O_k \\
0  & \text{ otherwise,}
\end{array}
\right.  
\end{align}
where the  sum runs over all integral ideals $\mf d$ containing $\mf a$. 

We will also need to consider the number field analogue of the divisor function, given   on $O_K$ by
\begin{align}
d(\mf m ) := \sum_{ \mf a \mf b = \mf m  } 1. 
\end{align}

By comparing the coefficients of the expressions 
\begin{align}
\zeta_K(s)^2 = \sum_{ \mf m } \frac{d(\mf m)}{\| \mf m\|^s} = \sum_{m=1}^\infty \frac{1}{m^s} \sum_{rn =m }a(r)a(n), 
\end{align}
where $a(n)$ denotes the number of integral ideals with absolute norm $n$,  we see that 
\begin{align}\label{div estimate}
\sum_{\mf m : \|\mf m\| \le x} d(\mf m) &= \sum_{m \le x} 
\sum_{rn =m }a(r)a(n) 
\\
&= \sum_{r \le x} a(r) \sum_{n \le x/r} a(n) 
\\
&\ll_K  x(\log x).
\end{align}
Here, we have used the well known estimate $\sum_{n \le x} a(n) \sim c_Kx$, where $c_K$ is the residue of $\zeta_K(s)$ at its simple pole $s=1$.  While this estimate is immediate upon applying the Tauberian theorem, the interested reader may also refer to the work of Landau \cite{Landau}. 

We will also need a bound on the mean square of this divisor function. Let us first note that 
\begin{align}
 d(\mf m)^2 &= \sum_{\mf a \mf b =\mf m}  d(\mf m) 
 =\sum_{\mf a \mf b =\mf m}  d(\mf a \mf b ) 
\le \sum_{\mf a \mf b =\mf m}  d(\mf a) d(\mf b ).  
\end{align}
With this in hand, we can use an argument similar to that used to treat the sum  \eqref{div estimate}, to obtain  
\begin{align}\label{div sq estimate}
\sum_{\mf m: \|\mf m\| \le x }  d(\mf m)^2  \ll_Kx(\log x)^3. 
\end{align}

We now proceed to give the proof of Theorem \ref{thm2} below. 

\subsection{Proof of Theorem \ref{thm2}}
 We define 
\begin{align}
	f_K(s) =  \zeta_{K, X}(s)M_{K,Y}(s)-1, 
\end{align}
where 
\begin{align}
	M_{K,Y}(s) :=\sum_{ \|\mathfrak{b}\| \leq  Y } 
	\frac{\mu(\mathfrak b )}{\|\mathfrak{b}\|^{s}}.
\end{align}
$Y$ will later be chosen as a function of $X$ and $ T$ that tends to infinity as they do. 
This gives 
\begin{align}
	f_K(s) &= \sum_{ \|\mathfrak{a}\| \leq  X } 
	\frac{1}{ \| \mf{a} \|^s  } \sum_{ \|\mathfrak{b}\| \leq  Y } 
	\frac{\mu(\mathfrak b )}{\|\mathfrak{b}\|^{s}} -1 
	\\
	&= \sum_{ \|\mathfrak{m}\| \leq  XY } 
	\frac{g( \mf m)}{ \|\mf m \|^s  }    \qquad \text{(say)},  
\end{align} 
where 
\begin{align} \label{expr for a(m)}
g( \mf m ) = \sum_{\substack{ \mf a \mf b = \mf m 
		\\ \| \mf a\| \le X \\ \|\mf b \| \le Y  }} 
	\mu(\mf b),  \qquad  (\mf m \neq O_k,  \|\mf m \| \le XY) 
\end{align}
Letting $\mf m = 1$ denote the ideal $O_K$ for convenience, we see that  $g(1) = 0$. Setting $Z = \min{(X,Y) }$, we see that \eqref{expr for a(m)} is simply the full sum over ideals $\mf b$ containing $\mf m$ whenever $\|\mf m\| \le Z$. By the  fundamental property \eqref{fund prop} of the M\"obius function, we have  $g( \mf m ) = 0 $ for $\|\mf m \| \le Z$. Hence, we may write 
\begin{align}\label{fkd}
f_K(s) = \sum_{Z < \|\mathfrak{m}\| \leq  XY } 
\frac{g( \mf m)}{ \|\mf m \|^s  }. 
\end{align}
%To proceed, we would like to have some bounds on the coefficients of the Dirichlet series $f_K(s)$ above. 
Clearly, \eqref{expr for a(m)} implies that 
\begin{align} \label{bound for g}
|g(\mf m)| \le d(\mf m).   
\end{align}  
We set 
\begin{align} \label{fk}
h(s) = 1- f_K(s)^2 = 
\zeta_{K, X}(s) M_{K,Y}(s) \big( 2-\zeta_{K, X}(s) M_{K,Y}(s)   \big). 
\end{align}
Then $h_K(s)$ is holomorphic and vanishes at the zeros of Let $\s\geq 2$. Then, from \eqref{div estimate} and \eqref{fk}
\begin{align}\label{hk}
f_K(s)&\ll \sum_{Z < \|\mathfrak{m}\| \leq  XY } 
\frac{d( \mf m)}{ \|\mf m \|^2  }\ll \frac{\log Z}{Z}\leq \frac{1}{2}
\end{align}
for $Z$ sufficiently large.  Combining this with \eqref{fk}, we find that  $h_K(2+it)\neq 0$ for $Z$ sufficiently large.
Let $\nu(\s',T)$ denote the number of zeros of $h_K(s)$ in the rectangle $\s>\s'$ and $0<t\leq T$.  We apply Littlewood's lemma concerning the zeros of an analytic function in a rectangle (see Titchmarsh \cite[p.~221]{titchmarsh}). This gives 
	\begin{align} 
2\pi\int_{\s_0}^2 \nu\left(\s, T\right)~d\s &= \int_{0}^T\log |h_K(\s_0+it)|~ dt -\int_{0}^T\log |h_K(2+it)|~dt \\
&+\int_{\s_0}^2 \arg h_K(\s+iT)~d\s -\int_{\s_0}^2\arg h_K(\s)~ d\s,\label{HL}
\end{align}
where $\s_0\geq \frac{1}{2}$ is fixed. From \eqref{fk} and \eqref{hk}, we deduce that 
\begin{align}
\Re(h_K(2+it))\geq \frac{1}{2}
\end{align}
for large $Z$. From \eqref{fkd} and \eqref{fk} we have 
\begin{align}
h_K(s)\ll\left(\sum_{Z < \|\mathfrak{m}\| \leq  XY } 
\frac{d( \mf m)}{ \|\mf m \|^{\s}}\right)^2\leq (XY)^{2(1-\s)}(\log XY)^{2}  
\end{align}
%\textcolor{red} {check:  changed  power of log XY from 4 to 2 }
for $\s<1$.  Therefore, for $0<\a<\frac{1}{2}$ and $\s>\a$
	\begin{align}
h_K(\s+it)\ll (XY)^2 (\log XY)^{2}.
\end{align}
Also $h_K(2+it)\geq \frac{1}{2}$ for large $Z$. Therefore, from Lemma \ref{tbook}, we have 
\begin{align}
\arg h_K(\s+iT)-\arg h_K\left(\s+i\frac{T}{2}\right)\ll \log XY
\end{align} 
for $\s\geq \s_0$. 
	This gives 
\begin{align}
\int_{\s_0}^2 \arg h_K(\s+iT) ~d\s - \int_{\s_0}^2\arg h_K\left(\s\right) ~d\s\ll\log XY\ll \log T\label{arb}
\end{align}
for $\s>\s_0$.

Since $h_K(s)$ is analytic for $\s\geq 2$ and $h_K(s)\to 1$ as $\s\to \infty$, by the residue theorem 
\begin{align}
\int_{0}^T\log h_K(2+it)~dt=\int_{2}^{\infty}\log h_K\left(\s\right)~d\s-\int_{2}^{\infty}\log h_K\left(\s+iT\right)~d\s.\label{vth}
\end{align}
Also,
\begin{align}
\log |h_K(s)|\leq \log \left(1+|f_K(s)|^2\right)\leq |f_K(s)|^2 \label{hfx} 
\end{align}
and 
\begin{align}
\log |h_K(s)|=\Re (\log h_K(s)).
\end{align}
Using this along with \eqref{fkd}, \eqref{hk} and \eqref{hfx} we have 
\begin{align}
-\int_{0}^T\log |h_K(2+it)|~dt&\ll \int_{2}^{\infty} |f_K(\s)|^2~d\s\ll 1. \label{rvb}
\end{align}

Thus, it remains to estimate only the first integral in \eqref{HL}, which we do by using Lemma \ref{mvt}.
Let $\s_0>\frac{1}{2}$. Then from \eqref{fkd} and \eqref{fk} we have 
\begin{align}
\int_{0}^T\log h_K(\s_0+it)~dt&\ll \int_{0}^T\left\lvert \sum_{Z < \|\mathfrak{m}\| \leq  XY } 
\frac{g( \mf m)}{ \|\mf m \|^{\s_0+it}  }\right\rvert^2~dt\\
&\ll\sum_{Z < \|\mathfrak{m}\|\leq  XY } 
\frac{d^2( \mf m)}{  \|\mathfrak{m}\|^{2\s_0}  }\left(T+O(\|\mathfrak{m}\|)\right)
\\
&\ll (T+XY)(XY)^{1-2\s_0}(\log XY)^{4}+(T+Z)Z^{1-2\s_0}(\log Z)^{4}
\\
&\ll TZ^{1-2\s_0}(\log T)^{4}+(XY)^{2-2\s_0}(\log T)^{4}+Z^{2-2\s_0}(\log T)^{4}. 
\end{align}
 Here, in the penultimate step, we have used the estimate \eqref{div sq estimate}. 
Thus, we have 
	\begin{align}
\int_{\s_0}^2 \nu\left(\s,T\right)~d\s\ll Tz^{1-2\s_0}(\log T)^{4}+&(XY)^{2-2\s_0}(\log T)^{4}+z^{2-2\s_0}(\log T)^{4}
 +\log T,\label{lvb}
\end{align}
for large $Z$. 
	For $1<\s_0\leq 2$, we take $X=Y=Z$ and we find that 
\begin{align}
\int_{\s_0}^2 \nu\left(\s, T\right)~d\s\ll (T+X)X^{1-2\s_0}(\log T)^{4}+\log T,\label{lvb1}
\end{align}
Combining \eqref{HL}, \eqref{arb}, \eqref{rvb}, \eqref{lvb}, and using the inequality 
\begin{align}
\int_{\s_0}^2 \nu\left(\s, T\right)~d\s\geq \int_{\s_0}^1 N\left(\s, T\right)~d\s,\label{gn}
\end{align}
which follows from \eqref{fk}, we obtain
\begin{align}
\int_{\s_0}^2 N\left(\s, T\right)~d\s\ll (T+X)X^{1-2\s_0}(\log T)^{4}+\log T.\label{las1}
\end{align}
uniformly for $1<\s_0\leq 2$.  For $\frac{1}{2}-\frac{1}{\log T}<\s_0\leq 1$ and $X\leq \sqrt{T}$, we take $X=Y=Z$. Then from \eqref{lvb} and \eqref{gn} one finds that 
\begin{align}
\int_{\s_0}^2 N\left(\s, T\right)~d\s\ll TX^{1-2\s_0}(\log T)^{4}.\label{las2}
\end{align}
Lastly, if $\sqrt{T}<X=o(T)$, we take $XY=T$. Then from the right side of \eqref{lvb} and \eqref{gn} we have 
\begin{align}
\int_{\s_0}^2 N\left(\s, T\right)~d\s\ll T(T/X)^{1-2\s_0}(\log T)^{4}.\label{las3}
\end{align}
Now for $\s_0>\frac{1}{2}-\frac{1}{\log T}$ we have 
\begin{align}
N(\s,T)\leq \log T\int_{\s_0}^2N(\s, T)~dt.\label{las4}
\end{align}
Hence the theorem follows by combining \eqref{las1}, \eqref{las2}, \eqref{las3} and \eqref{las4}.

\section{Mean Value Estimates: Proof of Theorem \ref{thm:F1M1}} \label{sec6}
Although we build upon the proof of Theorem 2 in \cite{MontgomeryVaughan2001},  our presentation is self-contained for the sake of completeness. 
We begin by noting  that $M_1(\alpha) \gg 1$ uniformly for $0< \alpha \le 1$. In order to show this, it suffices to observe that 
\begin{align}
\big|\log |F(2+it)| \big| &\le \big|\log (F(2+it))\big| = \bigg|  \sum_{n=1}^\infty \frac{\Lambda_F(n)}{n^{2+it} \log n} \bigg| 
\le k \sum_{n =1}^\infty \frac{\Lambda(n)}{n^2 \log n}
\end{align}
is bounded uniformly in $t$. This means that $|F(2+it)|$ must be greater than some absolute positive constant $c$, for all $t \in \mathbb{R}$.  Then, by the definition \eqref{eq:M1(alpha)} of $M_1(\alpha)$, we have 
\begin{align}
M_1(\alpha) \ge \left(\sum_{k=-\infty}^{\infty} \max\limits_{\substack{|t-k|\leq \frac{1}{2} }} \left\lvert\frac{F(2+it)}{1+it}\right\rvert^2\right)^{\frac{1}{2}} \gg c \left(\sum_{k=-\infty}^{\infty} 
\frac{1}{k^2}
\right)^{\frac{1}{2}} \gg 1,
\end{align} 
uniformly for $0 < \alpha \le 1$. 
Recalling that we want to show 
\begin{align} \label{eq:want}
F_1(x) \log x \ll_k \int_{1/\log x}^{1} \frac{M_1(\alpha)}{\alpha} d\alpha, 
\end{align}
we now aim to prove this for $x\ge x_0$, adjusting the implicit constant to deal with the range $3 \le x < x_0$. 
%We choose $x_0$ as follows. 

The function $|F_1(x)| \log x$ is an increasing function on each interval $I_n= [n, n+1)$. Thinking of the right hand side of \eqref{eq:want} (up to constant) as $V(x)$,  we want to show that $|F_1(x)| \log x \le V(x)$, where $V(x)$ is a given  function increasing with respect to $x$. If we know that this is true for some $x=x_0$ (say) in some interval $I_{n_0}$, then on any interval to the right of $I_{n_0}$, the inequality automatically follows for those $x$ such that $ |F_1(x_0)| \log x_0 \ge |F_1(x)| \log x$. Using such arguments,    we only need to prove the inequality for those $x \ge x_0$ which satisfy 
\begin{equation}\label{eq:condition on x}
  x_0 \le u \le x \implies    |F_1(u)| \log u < |F_1(x)| \log x. \end{equation}
 
With all this in hand, we now proceed to prove the result.  
The identity 
\[
\log x = \log n + \frac{(\log n) \log (x/n) }{ \log x} + \frac{(\log (x/n))^2}{\log x}
\]
gives us 
\begin{align}
	F_1(x)  \log x &= \sum_{n \le x} \frac{f(n)}{n} \log n  + \frac{1}{\log x} \sum_{n \le x} \frac{f(n)}{n} (\log n) \log (x/n) + \frac{1}{\log x} \sum_{n \le x} \frac{f(n)}{n} (\log (x/n))^2 
	\\
	&= T_1(x)+T_2(x)+T_3(x)   \qquad \text{ (say)}
\end{align}

\subsection{Estimate for $T_1(x)$}
We first consider the sum $T_1(x)$. As we will see, the main contribution to $F_1(x)\log x$ comes from this term.  Since the function $f$ is no longer assumed to be totally multiplicative, our key idea here is to make use of the properties of our class $\mathcal{ C}(k)$ to obtain for $\Re(s)>1$,  
\begin{align}
	\sum_{n=1}^\infty \frac{f(n)\log n}{n^s} = F(s) \left(-\frac{F'}{F}(s) \right) 
	&= \sum_{m =1}^\infty \frac{f(m)}{m^s} \sum_{d=1}^\infty \frac{\Lambda_F(d)}{d^s} 
	= \sum_{n =1}^\infty \frac{1}{n^s}\sum_{d|n}\Lambda_F(d) f(n/d). 
\end{align}
Comparing coefficients of on both sides, we obtain the identity 
\begin{equation} \label{eq:identity for f}
	f(n) \log n  = \sum_{d|n} \Lambda_F(d) f(n/d). 
\end{equation}
We notice that the function $f\equiv 1$ gives us back the identity $\log n =\sum_{d|n} \Lambda(d)$ used in \cite{MontgomeryVaughan2001}. Using \eqref{eq:identity for f}, we can write 
\[
T_1(x) = \sum_{n \le x} \frac{1}{n}  \sum_{d|n} \Lambda_F(d) f(n/d)  = \sum_{d \le x} \frac{ \Lambda_F(d)}{d} \sum_{m \le x/d} \frac{f(m)}{m}, 
\]
after interchanging summation. It follows that 
\begin{equation}\label{eq:bdT1}
	T_1(x) \ll \sum_{d \le x}  \frac{ |\Lambda_F(d)|}{d}  |F_1(x/d)|. 
\end{equation}
As $f \in \mathcal{ C}(k)$, it follows that $f(n) \ll d_k(n)$ for all $n$. %where $d_k(n)$ denotes the number of ways of writing $n$ as a product of $k$ positive numbers. 

Let $h =x/ (\log x)^k$ and $x-h \le v \le x$. Then we have 
\begin{align}
	T_1(x) -T_1(v) &= \sum_{v <n \le x} \frac{f(n) \log n}{n}
\ll \sum_{x-h \le n \le x } \frac{d_k(n) \log n}{n}	\\
	&\ll  \frac{\log x}{x}  \sum_{x-h \le n \le x } d_k(n)
\end{align}
as $n \asymp x $ in this range.  Using Theorem 2 of \cite{Shiu1979} to sum the $k$th divisor function in short intervals, we see that  the final sum in the above expression is $\ll h (\log x)^{k-1}$, giving 	$T_1(x) -T_1(v)  \ll 1$ due to the choice of $h$.  This gives us 
\begin{align}
	T_1(x) \ll 1 + \frac{1}{h}\int_{x-h}^x |T_1(v)|dv 
	&\ll 1+  \frac{1}{h}\int_{x-h}^x  \sum_{d \le v}  \frac{ |\Lambda_F(d)|}{d}  |F_1(v/d)| dv  \quad \text{(by \eqref{eq:bdT1})}
	\\
	&\ll  1+  \frac{1}{h}\sum_{d \le x}  \frac{ |\Lambda_F(d)|}{d} \int_{x-h}^x   |F_1(v/d)| dv, 
\end{align}
after interchanging the sum and integral, and noting that 
\[
\int_{\max{(d, x-h)} }^x   |F_1(v/d)| dv \le \int_{x-h}^x   |F_1(v/d)| dv. 
\]
As $F_1(u) =1$ for $1 \le u < 2$, it can be seen that the sum over $x/2 < d \le x-h$ is $\gg 1$, so that we may drop the  $O(1)$ term above. 
%Consider the sum above over $x/2< d \le x-h$. Since $F_1(u) = f(1) = 1$ for $1 \le u <2$, we see that the sum is $\gg 1$.  need something about $\Lambda_F$ to be able to say this
After a change of variable and interchanging the sum and integral once again, we get 
\begin{align}
	T_1(x) &\ll    \frac{1}{h} \sum_{d \le x}  { |\Lambda_F(d)|} \int_{\frac {x-h}{d} }^{x/d}   |F_1(u)| du 
	\ll 
	 \frac{1}{h}\int_{1}^x |F_1(u)| \bigg( \sum_{\frac{x-h}{u} \le d \le \frac{x}{u}}  { |\Lambda_F(d)|}\bigg) du. 
\end{align}
We now split the above integral as $\int_1^y + \int_y^x$, where $y =h/\log x = x/(\log x)^{k+1} $. 

Let us first consider $\int_{1}^y$, that is, the case $1 \le u \le y$. Then $x/u, h/u$ both tend to infinity as $x \rightarrow \infty$ and 
$ h/u = (x/u)(\log x)^{-k}$. 
Then we have, 
\begin{equation}
\sum_{\frac{x-h}{u} \le d \le \frac{x}{u}}  { |\Lambda_F(d)|}  \ll k \sum_{\frac{x}{u} -\frac{h}{u} \le d \le \frac{x}{u}}  \Lambda(d) \ll k \log \left(\frac{x}{u}\right)  \left( \pi\left(\frac{x}{u} \right) - \pi\left(\frac{x}{u}- \frac{h}{u} \right) \right).
\end{equation}
By Theorem 2 of \cite{MontgomeryVaughan1973}, the term in parenthesis above is less than
\[
 {2(h/u)}{(\log(h/u))^{-1}}.
\]
As 
\[
\log (x/u) = \log(h/u) + k\log \log x,
\]
 this gives 
\[
\sum_{\frac{x-h}{u} \le d \le \frac{x}{u}}  { |\Lambda_F(d)|} \ll \frac{kh}{u} \left( 1+ k \frac{\log\log x}{\log(h/u) }\right)
\ll k(k+1) h/u, 
\]
since $h/u \ge \log x$. 

%We have used here known results on the number of primes in short intervals.  For instance, a result of  Hoheisel \cite{Hoheisel} shows that  
%$\sum_{x -h(x) < n \le x } \Lambda(n) \sim h(x) $
%for $  h(x) \in [ x^{\delta},  x]$, whenever $\delta >1 - \frac{1}{33000}$. 
Putting this back into our integral, we have obtained  
\begin{equation} \label{eq:int1}
\int_{1}^y  \ll k(k+1)h \int_1^y |F_1(u)| \frac{du}{u}. 
\end{equation}

Let us now turn to the remaining range $y \le u \le x$. In this range, as $\log u \asymp \log x$ and $x$ must satisfy the hypothesis \eqref{eq:condition on x}, we see that $F_1(u) \ll |F_1(x)|$. Using this along with interchange of summation, we have 
\begin{align}\label{eq:int2}
\int_{y}^x \ll k |F_1(x)| \int_y^x \sum_{\frac{x-h}{u} \le d \le \frac{x}{u}}  { \Lambda(d)}\, du 
&\ll k |F_1(x)| \sum_{d \le x/y} \Lambda(d) \int_{\frac{x-h}{d}}^{\frac{x}{d}}   du 
\\
&\ll kh|F_1(x)| \sum_{d\le  (\log x)^{k+1}} \frac{\Lambda(d)}{d}
\\
&\ll k(k+1) h |F_1(x)| \log \log x.  
\end{align}

Combining \eqref{eq:int1} and \eqref{eq:int2} gives 
\begin{equation} \label{eq:bound for T1}
T_1(x) \ll k(k+1) \left(   \int_1^x |F_1(u)| \frac{du}{u} +  |F_1(x)| \log \log x \right).   
\end{equation}

With this in hand, our eventual goal is to show that 
\begin{equation} \label{eq:T1 goal}
\int_e^x |F_1(u)| \frac{du}{u} \ll k \int_{1/\log x}^1 \frac{M_1(\a)}{\a} d\a.
\end{equation}
As a first step towards this, we will show that 
\begin{equation} \label{eq:T1 step1}
\int_e^x |F_1(u)| \log u \frac{du}{u} \ll k M_1\left( \frac{2}{\log x}\right) \log x. 
\end{equation}
As the Cauchy-Schwarz inequality gives
\begin{align}
\int_e^x |F_1(u)| \log u \frac{du}{u}\le \left( \int_e^x  \frac{du}{u} \right)^{1/2}
 \left( \int_e^x  |F_1(u)|^2 (\log u)^2 \frac{du}{u} \right)^{1/2},  
\end{align}
it suffices to show that 
\[
 \int_e^x  |F_1(u)|^2 (\log u)^2 \frac{du}{u} \ll k^2  M_1\left( \frac{2}{\log x}\right)^2 \log x. 
\]
Even more generally, it is enough to show that 
\begin{equation}\label{eq:T1step1general}
\int_e ^\infty |F_1(u)|^2 (\log u)^2 \frac{du}{u^{1+2\a} } 
\ll k^2 \alpha^{-1} M_1(\a)^2, 
\end{equation}
for $0 < \a \le 1$. 
To see this, let us observe that putting $\a= 2/(\log x)$ in the above expression gives 
\begin{align}
 k^2 M_1\left( \frac{2}{\log x}\right)^2 \log x \gg 
\int_e ^\infty |F_1(u)|^2 (\log u)^2 \frac{du}{u^{1+(4/\log x)} } &\gg
\int_e ^x |F_1(u)|^2 (\log u)^2  e^{-\frac{4\log u}{\log x}} \frac{du}{u}  \\
&\ge e^{-4} \int_e ^x |F_1(u)|^2 (\log u)^2  \frac{du}{u}, 
\end{align}
as required. 

We focus on proving the bound \eqref{eq:T1step1general}. 
Writing 
\begin{align}
F_1(u) \log u := \sum_{n \le u} \frac{f(n)}{n} \log u &= 
\sum_{n \le u} \frac{f(n) \log n }{n} + 
\sum_{n \le u} \frac{f(n) \log (u/n) }{n}
\\
&= S_1(u) + S_2(u)  \qquad \text{ (say)},  
\end{align}
we see that $
|F_1(u)|^2 (\log u)^2 \le  |S_1(u)|^2 + |S_2(u)|^2 + 
O\big( |S_1(u) ||S_2(u)| \big).$
However, as 
\[
S_1(u) S_2(u) := \sum_{n \le u} \frac{f(n) \log n }{n}  \sum_{m \le u} \frac{f(m) \log (u/m) }{m} 
\ll (\log u)^2 |F_1(u)|^2, 
\]
we obtain 
\[
|F_1(u)|^2 (\log u)^2 \ll  |S_1(u)|^2 + |S_2(u)|^2. 
\]
Using this followed by Plancherel's identity, we have  
\begin{align}
\int_e ^\infty |F_1(u)|^2 (\log u)^2 \frac{du}{u^{1+2\a} } 
&\ll \int_1 ^\infty \bigg| \sum_{n \le u}  \frac{f(n)  }{n}\log n \bigg|^2   \frac{du}{u^{1+2\a} } 
+ \int_1 ^\infty  \bigg| \sum_{n \le u} \frac{f(n)  }{n}\log (u/n) \bigg|^2  \frac{du}{u^{1+2\a} } 
\\
&\ll  \int_{-\infty}^{\infty} \bigg| \frac{F'(1+\a+ it)}{\a+it}  \bigg|^2  dt 
+  \int_{-\infty}^{\infty} \bigg|\frac{F(1+\a+it)}{(\a+it)^2}  \bigg|^2  dt.  
\end{align}
We treat the second integral above by partitioning the range into intervals $[m- \frac{1}{2}, m+\frac{1}{2}]$ of length $1$. This gives 
\begin{align}
    \int_{-\infty}^{\infty} \bigg|\frac{F(1+\a+it)}{(\a+it)^2}  \bigg|^2  dt
    &\ll \sum_{m=\infty} ^\infty \left( \max_{ t \in [m-1/2, m+1/2]  } \left| 
    \frac{F(1+\a+it) }{\a+it} \right|^2      \right) \int_{m-\frac{1}{2}}^{m+\frac{1}{2}} \left|(\a+it) \right|^{-2} dt 
 \\
    \label{eq:intg2 bound}
    &\ll \a^{-1} M_1(\a)^2 
\end{align}
It remains to show a similar bound for the first integral. Again, breaking the range into intervals of length $1$ and writing $F' = F \cdot F'/F$ gives 
\begin{align}
    \int_{-\infty}^{\infty} \bigg| \frac{F'(1+\a+ it)}{\a+it}  \bigg|^2  dt 
    &\ll \sum_{m=-\infty} ^\infty \left( \max_{ t \in [m-1/2, m+1/2]  } \left| 
    \frac{F(1+\a+it) }{\a+it} \right|^2      \right) \int_{m-\frac{1}{2}}^{m+\frac{1}{2}} \left|\frac{F'}{F}(1+\a+it) \right|^2 dt 
\\    
&\ll M_1(\a)^2 \sup_m \int_{m-\frac{1}{2}}^{m+\frac{1}{2}} \left|\frac{F'}{F}(1+\a+it) \right|^2 dt.  
\end{align}
To prove \eqref{eq:T1step1general}, it is thus enough to show that 
\[
\int_{T-1/2}^{T +1/2} \left|\frac{F'}{F}(1+\a+it) \right|^2 dt \ll \a^{-1},  
\]
uniformly for $0<\alpha \le 1$. 
To do this, we use (15) of \cite{MontgomeryVaughan2001}, which states that if $|a_n| \le b_n$ for all $n$, then 
\begin{equation} \label{eq:(15) of MV} 
\int_{T-U}^{T+U} \left| \sum_n a_n n^{-it} \right|^2 dt \le 3 \int_{-U}^U 
\left| \sum_n b_n n^{-it} \right|^2 dt. 
\end{equation} 
As $f \in \mathcal{ C}(k)$ implies that 
\[
\frac{F'}{F}(1+\a+it) = \sum_n \frac{\Lambda_F(n)}{n^{1+\a}} n^{-it}, 
\]
we may apply \eqref{eq:(15) of MV} to $a_n = \Lambda_F(n)/ n^{1+\a} $ and $b_n = k \Lambda(n) / n^{1+\a}$. This gives 
\begin{align}
\int_{T-1/2}^{T +1/2} \left|\frac{F'}{F}(1+\a+it) \right|^2 dt 
&\le 3k^2 \int_{-1/2}^{1/2} \left| \frac{\zeta'}{\zeta}(1+\a+it)  \right|^2 dt
\\
 &\ll k^2 \int_{-1/2}^{1/2} |\alpha+it|^{-2} dt \ll k^2  \alpha^{-1}. 
\end{align}
This shows that the integral 
\begin{equation} \label{eq:intg bound} 
\int_{-\infty}^{\infty} \bigg| \frac{F'(1+\a+ it)}{\a+it}  \bigg|^2  dt \ll k^2 \a^{-1} M_1(\a)^2
\end{equation}
We have thus shown that \eqref{eq:T1step1general} and hence \eqref{eq:T1 step1} holds. Recall that our aim was to obtain the bound \eqref{eq:T1 goal} for $ \int_e^x |F_1(u)| du/u$. 
Performing integration by parts on 
\[
\int_e^x  \frac{1}{\log u} \left( |F_1(u)| \frac{\log u}{u}   du \right)   
\]
gives 
\[
\int_e^x |F_1(u)| \frac{du}{u} \le 
\frac{1}{\log x}\int_e^x |F_1(t)| \frac{\log t}{t}   dt + 
\int_e^x \frac{1}{(\log u)^2}  \left( \int_e^u |F_1(t)| \frac{\log t}{t}   dt \right) \frac{du}{u}. 
\]
By \eqref{eq:T1 step1}, followed by the change of variable $\a= (\log u)^{-1}$, this is 
\[
\ll k M_1\left( \frac{2}{\log x}\right) +k \int_{1/\log x}^1  M_1(2 \alpha) \a^{-1} d\a. 
\]
Since $M_1(\a)$ is a decreasing function of $\a$ by definition, we can bound the above as 
\[
\ll k  \int_{1/\log x}^1  M_1( \alpha) \a^{-1} d\a, 
\]
 thereby proving \eqref{eq:T1 goal}. 
This completes our estimate for $T_1(x)$, given by 
\begin{equation}
\label{eq:T1finalbd}
T_1(x) \ll k(k+1) \left( k \int_{1/\log x}^1  M_1( \alpha) \a^{-1} d\a + |F_1(x)| \log \log x\right).  
\end{equation}

\subsection{Estimate for $T_2(x)$}
Recall that 
\[
T_2(x) \log x := \sum_{n \le x}\frac{f(n)}{n}\log n\log(x/n).
\]
We have for any $\a >0$, 
\[
T_2(x) \log x = \frac{1}{2\pi i } \int_{\a - i \infty}^{\a+i \infty} \frac{F'(s+1)}{s^2} x^s ds  \ll 
 \int_{-\infty}^{\infty} \frac{|F'(1+\a+it)|}{|\a+it|^2} x^{\a} dt. 
\]
Letting $1/\log x \le \alpha \le 2/\log x$, and then using the Cauchy-Schwarz inequality, we have 
\begin{align}
T_2(x) \log x &\ll  \int_{-\infty}^{\infty} \frac{|F'(1+\a+it)|}{|\a+it|^2} dt 
\\
&\ll 
\left( \int_{-\infty}^{\infty} \frac{1}{|\a+it|^2} dt \right)^{1/2}  
\left( \int_{-\infty}^{\infty} \frac{|F'(1+\a+it)|^2}{|\a+it|^2} dt \right)^{1/2}.   
\end{align}
By \eqref{eq:intg bound}, we obtain 
\[
T_2(x) \ll  \frac{ k M_1(\a)}{\a \log x}, 
\]
uniformly for  $1/\log x \le \alpha \le 2/\log x$. Hence, we can conclude that 
\begin{equation} \label{eq:T2finalbd}
T_2(x) \ll k \int_{1/\log x}^{2/\log x} \frac{M_1(\a)}{\a} d\a. 
\end{equation}

\subsection{Estimate for $T_3(x)$}
Similar to as done for $T_2$, we write 
\begin{align}
T_3(x) \log x &:= \sum_{n \le x} \frac{f(n)}{n} \big( \log(x/n)\big)^2
\\
&= \frac{1}{\pi i} \int_{\a - i\infty}^{\a+ i \infty } 
\frac{F(s+1)}{s^3} x^s ds, 
\end{align}
for $\a >0$.
Letting $1/\log x \le \alpha \le 2/\log x$, and then using the Cauchy-Schwarz inequality, we have 
\begin{align}
T_3(x) \log x &\ll  \int_{-\infty}^{\infty} \frac{|F(1+\a+it)|}{|\a+it|^3} dt 
\\
&\ll 
\left( \int_{-\infty}^{\infty} \frac{1}{|\a+it|^2} dt \right)^{1/2}  
\left( \int_{-\infty}^{\infty} \left| \frac{F(1+\a+it)}{(\a+it)^2} \right|^2 dt \right)^{1/2}.   
\end{align}
By \eqref{eq:intg2 bound}, we have 
\[
T_3(x) \ll \frac{ M_1(\a)}{\a \log x}, 
\]
uniformly for  $1/\log x \le \alpha \le 2/\log x$. As before, this allows us to conclude that 
\begin{equation} \label{eq:T3finalbd}
T_3(x) \ll \int_{1/\log x}^{2/\log x} \frac{M_1(\a)}{\a} d\a. 
\end{equation}

\subsection{A bound for $F_1(x)$}
Putting together \eqref{eq:T1finalbd}, \eqref{eq:T2finalbd} and \eqref{eq:T3finalbd}, we have the bound 
\[
F_1(x) \log x \ll  k(k+1) \left( k \int_{1/\log x}^1  M_1( \alpha) \a^{-1} d\a + |F_1(x)| \log \log x\right).
\]
As $k(k+1)\log \log x = o(\log x) $ for $x$ sufficiently large, we obtain 
\[
F_1(x) \ll \frac{k^2(k+1)}{\log x} \int_{1/\log x}^1 M_1( \alpha) \a^{-1} d\a, 
\]
thus completing the proof of Theorem \ref{thm:F1M1}.

\section{Mean Value Estimates: Proof of Theorem \ref{thm:F1estimate}} \label{sec7}
Before beginning with the proof of Theorem \ref{thm:F1estimate}, we set up the following lemmas. We note that by putting  $k=1$, we recover Lemmas 1 and 2 of \cite{MontgomeryVaughan2001}. 

\subsection{Preliminary Lemmas} 
\begin{lemma}\label{lem:MVlemma1}
	Let $f\in \mathcal{C}(k)$ and $F(s)$ be the associated Dirichlet series. Then for $1<\s_1\leq \s_2\leq 2$ we have 
	\begin{align}
	\left(\frac{\s_1-1}{\s_2-1}\right)^{k}	\ll\left\lvert\frac{F(\s_2)}{F(\s_1)}\right\rvert\ll\left(\frac{\s_2-1}{\s_1-1}\right)^{k}.
	\end{align}
\end{lemma}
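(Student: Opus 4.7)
The strategy is to take logarithms and exploit the coefficient bound \eqref{vonmangoldt}, comparing to the corresponding identity for $\zeta(s)$.

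First I would observe that since $\log F(s) = \sum_{n\ge 2} \Lambda_f(n)/(n^s \log n)$ converges absolutely for $\Re(s)>1$, we have for $1<\sigma_1 \le \sigma_2 \le 2$,
\[
\log F(\sigma_2) - \log F(\sigma_1) = \sum_{n\ge 2} \frac{\Lambda_f(n)}{\log n}\left(\frac{1}{n^{\sigma_2}} - \frac{1}{n^{\sigma_1}}\right).
\]
Taking real parts and recalling that $\Re \log F(\sigma) = \log|F(\sigma)|$ when $\log F$ is defined by the Dirichlet series, I obtain
\[
\log\left|\frac{F(\sigma_2)}{F(\sigma_1)}\right| = \sum_{n\ge 2}\frac{\Re \Lambda_f(n)}{\log n}\left(\frac{1}{n^{\sigma_2}} - \frac{1}{n^{\sigma_1}}\right).
\]

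Next I would apply the key hypothesis $|\Lambda_f(n)| \le k\Lambda(n)$. Since the difference $n^{-\sigma_1}-n^{-\sigma_2}$ is non-negative, this yields
\[
\left|\log\left|\frac{F(\sigma_2)}{F(\sigma_1)}\right|\right| \le k \sum_{n\ge 2}\frac{\Lambda(n)}{\log n}\left(\frac{1}{n^{\sigma_1}} - \frac{1}{n^{\sigma_2}}\right) = k\bigl(\log\zeta(\sigma_1) - \log\zeta(\sigma_2)\bigr),
\]
where I used the classical identity $\sum_{n\ge 2}\Lambda(n)/(n^{\sigma}\log n) = \log\zeta(\sigma)$. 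Exponentiating gives
\[
\left(\frac{\zeta(\sigma_2)}{\zeta(\sigma_1)}\right)^{k} \le \left|\frac{F(\sigma_2)}{F(\sigma_1)}\right| \le \left(\frac{\zeta(\sigma_1)}{\zeta(\sigma_2)}\right)^{k}.
\]

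Finally I would invoke the standard two-sided bound $\zeta(\sigma) \asymp (\sigma-1)^{-1}$ valid for $1<\sigma\le 2$, which gives $\zeta(\sigma_1)/\zeta(\sigma_2) \asymp (\sigma_2-1)/(\sigma_1-1)$, and conclude
\[
\left(\frac{\sigma_1-1}{\sigma_2-1}\right)^{k} \ll \left|\frac{F(\sigma_2)}{F(\sigma_1)}\right| \ll \left(\frac{\sigma_2-1}{\sigma_1-1}\right)^{k}.
\]

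There is no real obstacle here: the argument is a direct consequence of the Dirichlet-series identity for $\log F$ together with the von~Mangoldt majorization \eqref{vonmangoldt}. The only mild care required is in passing from $\log F$ (defined via the series) to $\log|F|$ and noting that $n^{-\sigma_1}-n^{-\sigma_2}\ge 0$, so that the bound produced by the triangle inequality involves precisely $\log\zeta(\sigma_1)-\log\zeta(\sigma_2)$ with no loss.
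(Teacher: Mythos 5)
Your proof is correct and follows essentially the same route as the paper's: both pass to the logarithmic Dirichlet series, bound $|\Lambda_f(n)|\le k\Lambda(n)$ termwise against the positive quantity $n^{-\sigma_1}-n^{-\sigma_2}$, compare with $\log\zeta$, and finish with $\zeta(\sigma)\asymp(\sigma-1)^{-1}$. The only cosmetic difference is that you obtain both inequalities at once from a two-sided bound on $\log|F(\sigma_2)/F(\sigma_1)|$, whereas the paper proves the upper bound and then notes the lower bound follows "similarly."
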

\begin{proof}
By \eqref{eq:log of F}, we find that 
	\begin{align}
\left\lvert\frac{F(\s_2)}{F(\s_1)} \right\rvert \leq \exp \left( \left\lvert \sum_{n=1}^{\infty} \frac{\Lambda_F(n)}{\log n}\left(\frac{1}{n^{\s_1}}-\frac{1}{n^{\s_2}}\right) \right\rvert \right) 
&\leq \exp \left( k \sum_{n=1}^{\infty} \frac{\Lambda(n)}{\log n}\left(\frac{1}{n^{\s_1}}-\frac{1}{n^{\s_2}}\right)  \right)
\\  &=\left(\frac{\zeta(\s_1)}{\zeta(\s_2)}\right)^k\asymp\left(\frac{\s_2-1}{\s_1-1}\right)^k,
\end{align}
since 
\begin{equation}\label{eq:asymp zeta}
\zeta(\s) \asymp \frac{1}{\s -1} 
\end{equation}
for $1 < \s \le 2$.
This proves the upper bound. 	Similarly, one can show that
\begin{align}
\left\lvert\frac{F(\s_1)}{F(\s_2)}\right\rvert\ll\left(\frac{\s_2-1}{\s_1-1}\right)^k,
\end{align}
which  gives the required lower bound. 
\end{proof}

\begin{lemma}\label{lem:MVlemma2}
	Let $f\in \mathcal{C}(k)$ and $F(s)$ be the associated Dirichlet series. If $1<\s\leq 2$ and $|t|\leq 2$ then 
	\begin{align}
	\frac{F(\s+it)}{F(\s)}\ll_k \left(1+\frac{|t|}{\s-1}\right)^{4k/\pi}.
	\end{align}
	If $1 < \s\leq 2$ and $|t|\geq 2$ then 
	\begin{align}
	\frac{F(\s+it)}{F(\s)}\ll_k \left(\frac{\log|t|}{\s-1}\right)^{4k/\pi}.
	\end{align}
\end{lemma}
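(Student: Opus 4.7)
My plan is to adapt the approach of Lemma 2 of Montgomery and Vaughan \cite{MontgomeryVaughan2001}, tracking how the factor $k$ enters at each stage. Taking real parts of \eqref{eq:log of F}, invoking $|\Re(z)|\leq|z|$ together with the defining hypothesis $|\Lambda_f(n)|\leq k\Lambda(n)$ of the class $\mathcal{C}(k)$, and using the identity $|n^{-it}-1|=2|\sin(t\log n/2)|$, I would first establish
\[
\log\left|\frac{F(\s+it)}{F(\s)}\right|\leq 2k\sum_{n\geq 2}\frac{\Lambda(n)\,|\sin(t\log n/2)|}{n^\s\log n}.
\]
The source of the exponent $4k/\pi$ is the classical Fourier expansion
\[
|\sin(u/2)|=\frac{2}{\pi}-\frac{4}{\pi}\sum_{j\geq 1}\frac{\cos(ju)}{4j^2-1}.
\]
Substituting and interchanging summations, legitimate by absolute convergence for $\s>1$, recasts the bound as
\[
\log\left|\frac{F(\s+it)}{F(\s)}\right|\leq\frac{4k}{\pi}\log\zeta(\s)-\frac{8k}{\pi}\sum_{j\geq 1}\frac{\log|\zeta(\s+ijt)|}{4j^2-1}.
\]

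The main obstacle, which accounts for the bulk of the work, is to control the second sum. I would invoke two classical inputs on $\zeta$: first, $1/|\zeta(\s+iu)|=O(1)$ uniformly for $\s\in(1,2]$ and $|u|\leq 2$ (which follows from the continuity of $1/\zeta$ on this compact region, since $\zeta$ has a simple pole at $s=1$ and no zeros on $\Re(s)\geq 1$); second, $1/|\zeta(\s+iu)|\ll\log|u|$ uniformly for $\s\geq 1$ and $|u|>2$, a standard bound of the type found in \cite[\S 3.11]{titchmarsh}. Splitting the sum over $j$ at $|jt|=2$, exploiting $\sum_{j\geq 1}1/(4j^2-1)=1/2$ and the convergence of $\sum_{j\geq 1}\log\log(2j)/(4j^2-1)$, one shows that $\sum_{j\geq 1}\log(1/|\zeta(\s+ijt)|)/(4j^2-1)=O(1)$ for $|t|\leq 2$, and equals $\tfrac{1}{2}\log\log|t|+O(1)$ for $|t|>2$.

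Combining these estimates with $\log\zeta(\s)=\log(1/(\s-1))+O(1)$, which comes from \eqref{eq:asymp zeta}, delivers the claimed bounds in the principal range $|t|\geq\s-1$. In the remaining subregime $|t|<\s-1$ (which only arises in the case $|t|\leq 2$), the Fourier approach is wasteful; here I would apply the elementary inequality $|n^{-it}-1|\leq|t|\log n$ directly, obtaining $\log|F(\s+it)/F(\s)|\leq k|t|(-\zeta'/\zeta)(\s)\ll k|t|/(\s-1)\leq k$. This yields $|F(\s+it)/F(\s)|=O_k(1)$, which is consistent with the lemma since $(1+|t|/(\s-1))^{4k/\pi}\geq 1$ throughout.
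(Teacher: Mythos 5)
Your global Fourier/$\zeta$-product framework is a genuinely cleaner alternative to the paper's argument, and you reach the $|t|\geq 2$ bound correctly. Your reduction to
\[
\log\left|\frac{F(\s+it)}{F(\s)}\right|\leq\frac{4k}{\pi}\log\zeta(\s)-\frac{8k}{\pi}\sum_{j\geq 1}\frac{\log|\zeta(\s+ijt)|}{4j^2-1}
\]
and the subsequent treatment of the tail for $|t|\geq 2$ (using $\sum_j 1/(4j^2-1)=1/2$ and the convergence of $\sum_j\log\log(2j)/(4j^2-1)$) is sound, as is the elementary handling of $|t|<\s-1$ via $|n^{-it}-1|\leq |t|\log n$.

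However, there is a real gap in the range $\s-1\leq|t|\leq 2$. Your estimate $\sum_{j\geq 1}\log(1/|\zeta(\s+ijt)|)/(4j^2-1)=O(1)$ is a valid upper bound, but it is too lossy: combined with $\log\zeta(\s)=\log(1/(\s-1))+O(1)$, it only delivers $|F(\s+it)/F(\s)|\ll_k(\s-1)^{-4k/\pi}$, which is strictly \emph{weaker} than the claimed $\bigl(1+|t|/(\s-1)\bigr)^{4k/\pi}$ whenever $\s-1\leq|t|\ll 1$ (e.g.\ $|t|=(\s-1)^{1/2}$ gives $(\s-1)^{-2k/\pi}$ on the right side). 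The missing ingredient is that for $j$ with $jt$ small, $\zeta(\s+ijt)\sim 1/\bigl((\s-1)+ijt\bigr)$ is large and the terms $\log(1/|\zeta(\s+ijt)|)\leq\log\bigl|(\s-1)+ijt\bigr|+O(1)\leq\log\bigl(j\,(\s-1+|t|)\bigr)+O(1)$ are strongly \emph{negative}; summing them over $j\lesssim 1/(\s-1+|t|)$ yields $\sum_j\log(1/|\zeta(\s+ijt)|)/(4j^2-1)\leq\tfrac12\log(\s-1+|t|)+O(1)$, and this extra negative term is precisely what converts $(\s-1)^{-4k/\pi}$ into $\bigl(1+|t|/(\s-1)\bigr)^{4k/\pi}$. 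The paper avoids this bookkeeping altogether by \emph{not} using the $\zeta$-product for $|t|\leq 2$: it returns to the prime sum $\sum_p p^{-\s}|\sin(t\log p/2)|$, splits at $p=e^{1/t}$ and $p=e^{1/(\s-1)}$, and uses a PNT-based estimate $\sum_{p\leq y}p^{-1}|\sin(t\log p/2)|=\tfrac{2}{\pi}\log\log y+O(1)$; this isolates the same cancellation coming from small primes where the sine factor is small. Either route works, but as written your $O(1)$ bound does not establish the first inequality of the lemma.
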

\begin{proof}
	As done in the proof of the previous lemma, by \eqref{eq:log of F}, we have 
\begin{align}
\left\lvert\frac{F(\s +it )}{F(\s)} \right\rvert &\leq \exp \left( k  \sum_{n=1}^{\infty} \frac{\Lambda(n)}{\log n}\left\lvert\frac{1}{n^{\s}}-\frac{1}{n^{\s+it}} \right\rvert  \right) 
\\
&= \exp \left( k  \sum_{n=1}^{\infty} \frac{\Lambda(n)}{n^\s(\log n)}\left\lvert \sin\left( \frac{t}{2} \log n\right) \right\rvert  \right). 
\label{fof}
\end{align}
Since the Fourier series of $|\sin x|$ is given by 
\begin{align}
|\sin x|=\sum_{m=-\infty}^{\infty}\frac{2}{\pi(1-4m^2)}e^{2i m x},
\label{eq:fourier sin}
\end{align}
we obtain
\begin{align}
\left\lvert\frac{F(\s+it)}{F(\s)}\right\rvert&\leq  
\exp\left(2k\sum_{m=-\infty}^{\infty}\frac{2}{\pi(1-4m^2)}\sum_{n=1}^{\infty}\frac{\Lambda(n)}{(\log n)n^{\s-imt}}\right)=\prod_{m=-\infty}^{\infty}(\zeta(\s-imt))^{\frac{4k}{\pi(1-4m^2)}}
\end{align}
Applying \eqref{eq:asymp zeta} for the $m=0$ term,  we find
\begin{align}
\left\lvert\frac{F(\s+it)}{F(\s)}\right\rvert&\ll \frac{1}{(\s-1)^{4k/\pi}}\prod_{\substack{m=-\infty\\m\neq 0}}^{\infty}(\zeta(\s-imt))^{\frac{4k}{\pi(1-4m^2)}}.\label{ff1}
\end{align}
Now, let $|t|\geq 2$. There exists a constant $C$ (cf. (3.11.18) of Titchmarsh \cite{titchmarsh})  such that 
\begin{align}
|\zeta(s)|\geq \frac{1}{C\log |t|}\label{zb}
\end{align}
uniformly for $\s>1$ and $|t|\geq 2$. 
Noticing that the exponents in the product above are negative for $m \ne 0$, combining \eqref{ff1} and \eqref{zb} gives 
\begin{align}
\left\lvert\frac{F(\s+it)}{F(\s)}\right\rvert&\ll \frac{1}{(\s-1)^{4k/\pi}}\prod_{m=1}^{\infty}(C\log (m|t|))^{\frac{8k}{\pi(4m^2-1)}}.
\end{align}
Finally employing the fact $\log(m|t|)\ll \log m\log |t|$ we deduce that
\begin{align}
\left\lvert\frac{F(\s+it)}{F(\s)}\right\rvert &\ll  \frac{1}{(\s-1)^{4k/\pi}}(\log |t|)^{\sum_{m=1}^{\infty}\frac{8k}{\pi(4m^2-1)}} \exp\left(\sum_{m=1}^{\infty}\frac{8k\log\log m}{\pi(4m^2-1)}\right).  
\end{align}
As the series inside the parenthesis is absolutely convergent, we have
\[\left\lvert\frac{F(\s+it)}{F(\s)}\right\rvert
 \ll_k \frac{1}{(\s-1)^{4k/\pi}}(\log |t|)^{\sum_{m=1}^{\infty}\frac{8k}{\pi(4m^2-1)}}.  
\]
Plugging  $x=0$ into  both sides of \eqref{eq:fourier sin} gives that the sum of all the Fourier coefficients is zero, that is
\begin{align}
\sum_{m=1}^{\infty}\frac{4}{\pi(1- 4m^2)}= -\frac{2}{\pi}.
\end{align}
Thus, we have obtained 
\begin{align}
\left\lvert\frac{F(\s+it)}{F(\s)}\right\rvert&\ll_k\left(\frac{\log|t|}{\s-1}\right)^{\frac{4k}{\pi}}, 
\end{align}
which is the required bound for the case $|t|\ge 2$. 

We now consider the remaining case $|t| \le 2$. 
Note that, for $1<\s\leq 2$ 
\begin{align}
\frac{F'(s)}{F(s)}=-\sum_{n=1}^{\infty}\frac{\Lambda_F(n)}{n^s}\ll k\sum_{n=1}^{\infty}\frac{\Lambda(n)}{n^{\s}}=-k\frac{\zeta'(\s)}{\zeta(\s)}\ll\frac{k}{\s-1}.\label{fpf}
\end{align}
Applying \eqref{fpf} to the Laurent series of $F(s)$ about $s=\s$, we see that for $0\leq t\leq \s-1$, we have  
\begin{align}
\frac{F(\s+it)}{F(\s)}\ll_k 1\label{e1},  
\end{align}
as required. 
Now consider $t > \s-1$. From \eqref{fof}, we can write
\begin{align}
\left\lvert\frac{F(\s+it)}{F(\s)}\right\rvert&\ll_k\exp\left(2k\sum_{p}\frac{1}{p^{\s}}\left\lvert\sin\left(\frac{t}{2}\log p\right)\right\rvert\right).\label{e2}
\end{align}
Let $1<\s\leq 2$ and $\s-1< t\leq 2$. If $p\leq e^{1/t}$ then we have the estimate
\begin{align}
\sum_{p\leq e^{1/t}}\frac{1}{p^{\s}}\left\lvert\sin\left(\frac{t}{2}\log p\right)\right\rvert&\ll t	\sum_{p\leq e^{1/t}}\frac{\log p}{p}\\
&\ll t\log(e^{1/t}) \ll 1.\label{e3}
\end{align}
%In the second step we have used the inequality $|\sin x|\leq x$.
For $p>e^{1/(\s-1)} $, using partial summation and the estimate $\pi(x) \ll x$ for the number of primes up to $x$, we have 
\begin{align}
\sum_{p>e^{1/(\s-1)}}\frac{1}{p^{\s}}\left\lvert\sin\left(\frac{t}{2}\log p\right)\right\rvert&\ll 	\sum_{p> e^{1/(\s-1)}}\frac{1}{p^{\s}}\\
&\ll\frac{\pi(e^{1/(\s-1)})}{e^{\s/(\s-1)}} +\s \int_{e^{1/(\s-1)}}^{\infty}\frac{\pi(t)}{t^{\s+1}}~dt\\
&\ll 1+  \int_{e^{1/(\s-1)}}^{\infty}\frac{1}{t^{\s}}~dt 
\ll 1.\label{e4}
\end{align}
In order to bound the remaining sum over primes $ e^{1/t} <  p \le e^{1/(\s-1)}$, define
$
g(u) = \dfrac{\left\lvert\sin\left(\frac{t}{2}\log u \right)\right\rvert}{u}. 
$
By the Fourier series \eqref{eq:fourier sin}, we see that 
\begin{align}
\int_0^y |\sin u| du &= \frac{2}{\pi}y + O\left(  \sum_{\substack{ m=1 }}^\infty \frac{1}{m^2}  \left|\int_0^y {\cos(2mu)} ~du \right|\right)
\\
&= \frac{2}{\pi}y + O\bigg( \sum_{\substack{ m=1 }}^\infty  \frac{1}{m^2} \left| \frac{\sin(2my) }{m} \right| \bigg) \\ &=  
 \frac{2}{\pi}y + O(1). 
\end{align}
After a suitable change of variable, this yields 
\begin{align}
I(y) := \int_1^y g(u) du =  \frac{2}{\pi} \log y +O(1). \label{I(y)}
\end{align}
%%%%%%%%%%%%%%%%%%%%%%%%%%%%Shorter proof%%%%%%%%%%%%%%%%%%%
{Let $\pi (u) $ denote the number of primes $p \le u$. By the prime number theorem, we have }
\begin{align}
	\pi(u)  &=\li(u) + O(u \exp(-c\sqrt{\log u}))=:\li(u)+E(u), 
\end{align}  
where
\begin{align}\label{li}
	 \li(u)=\int_2^u \frac{dv}{\log v}.
	\end{align}
we observe that 
\begin{align}
\sum_{p \le y} \frac{1}{p^{\s}}\left\lvert\sin\left(\frac{t}{2}\log p\right)\right\rvert  
&\le \sum_{p \le y} \frac{1}{p}\left\lvert\sin\left(\frac{t}{2}\log p\right)\right\rvert\\ 
%=  \sum_{u \le y} g(u) \big( \pi(u) - \pi(u-1) \big)
&= \int_{2}^y g(u) ~d\pi(u)\\
&= \int_{2}^y g(u) ~d(\li(u))+\int_{2}^y g(u) ~d(E(u)).
\label{lastep}
\end{align} 
From \eqref{li} followed by integration by parts, we have
\begin{align}
	\int_{2}^y g(u) ~d(\li(u))=   \int_{2}^y \frac{g(u)}{\log u} ~du
	&= \frac{I(y)}{\log y}+ \int_{2}^y\frac{I(u)}{u (\log u)^2}=\frac{2}{\pi}\log\log y +O(1),\label{maint}
	\end{align}
	after using \eqref{I(y)}.
For the remaining term of \eqref{maint}, using the series \eqref{eq:fourier sin} once again, we obtain
\begin{align}
	\int_{2}^y g(u) ~d(E(u)) 
= \frac{2}{\pi} \int_2^y \frac{1}{u} dE(u) + O \left( \sum_{m =1}^\infty \frac{1}{m^2} \left|\int_2^y \frac{\cos(tm \log u)}{u} ~dE(u) \right| \right).  
	\label{errt}
\end{align}
For any differentiable function $h(u) \ll 1/u$, with $h'(u) \ll 1/u^2$, integration by parts gives 
\[
\int_2^y h(u) dE(u) = h(y) E(y) - \int_2^y h'(u) E(u) \ll 1. 
\]
Using this bound for the integrals involved in \eqref{errt} allows us to deduce that 
\[
	\int_{2}^y g(u) ~d(E(u)) = O(1). 
\] 
{With this, putting \eqref{maint} back into \eqref{lastep}, we have obtained }
\begin{align}
\sum_{p \le y} \frac{1}{p}\left\lvert\sin\left(\frac{t}{2}\log p\right)\right\rvert  
 = \frac{2}{\pi} \log\log y + O(1).
 \end{align}
%%%%%%%%%%%%%%%%%%%%%%%%%%%%%%%%%%%%%%%%%%%%%%%%%%%%%%%%%%%%
This gives
\begin{align}
\sum_{ e^{1/t} <  p \le e^{1/(\s-1)}} \frac{1}{p^{\s}}\left\lvert\sin\left(\frac{t}{2}\log p\right)\right\rvert \le  \frac{2}{\pi} \log \left(\frac{t}{\s-1}\right)+O(1)
\label{e5}
\end{align}
Combining \eqref{e1}, \eqref{e2}, \eqref{e3}, \eqref{e4} and \eqref{e5},  we have
\begin{align}
\frac{F(\s+it)}{F(\s)}\ll_k \left(1+\frac{|t|}{\s-1}\right)^{4k/\pi}
\end{align}
for $|t|\leq 2$. This  completes the proof of the lemma. 	
\end{proof}

\subsection{Proof of Theorem \ref{thm:F1estimate}}
We now combine Lemmas \ref{lem:MVlemma1} and \ref{lem:MVlemma2} above to obtain a bound on the quantity $M_1(\a)$ defined in \eqref{eq:M1(alpha)}. The following lemma represents this key step.  

\begin{lemma}\label{lem:keylemma3}
Let $f\in \mathcal{C}(k)$, $F(s)$ be the associated Dirichlet series and $1+\frac{1}{\log x} \le \s\leq 2$. Then for $0<\a\leq 1$, we have 
\begin{align}
M_1(\a)\ll |F(\s)|\left(\frac{(\s-1)^{k}}{\a^{k+1}}+\frac{(\s-1)^{k(1-4/\pi)}}{\a^{k}}+\frac{\a^{k(1-4/\pi)}}{(\s-1)^{k}}\right).
\end{align}
\end{lemma}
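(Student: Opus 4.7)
The plan is to estimate each inner maximum in the defining sum of $M_1(\alpha)^2$ by combining Lemmas \ref{lem:MVlemma1} and \ref{lem:MVlemma2}, then sum the contributions over the integer shift (which I shall call $n$ to avoid clash with the class parameter $k$). First, Lemma \ref{lem:MVlemma2} gives $|F(1+\sigma'+it)|\ll|F(1+\sigma')|(1+|t|/\sigma')^{4k/\pi}$ for $|t|\le 2$ and $|F(1+\sigma'+it)|\ll|F(1+\sigma')|(\log|t|/\sigma')^{4k/\pi}$ for $|t|\ge 2$. Second, Lemma \ref{lem:MVlemma1} replaces $|F(1+\sigma')|$ by a multiple of $|F(\sigma)|$: the ratio equals $(\sigma'/(\sigma-1))^{k}$ when $\sigma'\ge\sigma-1$, and $((\sigma-1)/\sigma')^{k}$ when $\sigma'<\sigma-1$. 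For the denominator I use $|\sigma'+it|\asymp\max(|n|,\sigma',|t|)$.

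I would then split the $n$-sum into the tail $|n|\ge 3$ and the central block $|n|\le 2$. The tail contribution is controlled by the convergence of $\sum_{|n|\ge 3}(\log|n|)^{8k/\pi}/|n|^{2}$, which reduces it to estimating the maximum over $\sigma'\in[\alpha,1]$ of $|F(1+\sigma')|/\sigma'^{4k/\pi}$; after applying Lemma \ref{lem:MVlemma1} on either side of the threshold $\sigma'=\sigma-1$, this is absorbed into the second and third terms of the target. The $n=0$ rectangle needs the most care because $|\sigma'+it|$ can be as small as $\alpha$; I would further split it into $|t|\le\sigma'$ (where the Lemma \ref{lem:MVlemma2} multiplier is $O(1)$ and $|\sigma'+it|\asymp\sigma'$, so the integrand is $\ll|F(1+\sigma')|/\sigma'$) and $|t|>\sigma'$ (where the multiplier is $(|t|/\sigma')^{4k/\pi}$ and $|\sigma'+it|\asymp |t|$, so the integrand is $\ll|F(1+\sigma')|/\sigma'^{4k/\pi}$). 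The cases $|n|=1,2$ are handled analogously with $|t|\asymp 1$.

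Optimizing over $\sigma'\in[\alpha,1]$ then produces the three extremal regimes matching the stated bound. The first term $|F(\sigma)|(\sigma-1)^{k}/\alpha^{k+1}$ arises from the $n=0$ subregion $|t|\le\sigma'$ at $\sigma'=\alpha<\sigma-1$, since there $|F(1+\alpha)|/\alpha\ll|F(\sigma)|(\sigma-1)^{k}/\alpha^{k+1}$ by Lemma \ref{lem:MVlemma1}. The second term $|F(\sigma)|(\sigma-1)^{k(1-4/\pi)}/\alpha^{k}$ comes from the transition $\sigma'\asymp\sigma-1$ with $|t|$ of unit size, where Lemma \ref{lem:MVlemma2}'s multiplier contributes $(\sigma-1)^{-4k/\pi}$ on top of the ratio $((\sigma-1)/\alpha)^{k}$ already supplied by Lemma \ref{lem:MVlemma1}. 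The third term $|F(\sigma)|\alpha^{k(1-4/\pi)}/(\sigma-1)^{k}$ comes from the regime $\sigma'\ge\sigma-1$ evaluated at $\sigma'=\alpha$ when $\alpha\ge\sigma-1$, since the exponent $k(1-4/\pi)$ is negative and the maximum of $\sigma'^{k(1-4/\pi)}/(\sigma-1)^{k}$ on $[\max(\alpha,\sigma-1),1]$ is at the left endpoint. Squaring, summing, and taking the square root produces the three-term bound.

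The main obstacle is executing the case analysis cleanly across the threshold $\sigma'=\sigma-1$ and the two subregions $|t|\lessgtr\sigma'$ of the $n=0$ rectangle: the multiplier from Lemma \ref{lem:MVlemma2} and the denominator $|\sigma'+it|$ must be balanced so that the three distinct extremal contributions emerge separately, rather than collapsing into a single dominant term that would yield a weaker bound.
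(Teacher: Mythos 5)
Your overall strategy — combine Lemmas \ref{lem:MVlemma1} and \ref{lem:MVlemma2} to compare $F(1+\sigma'+it)$ with $F(\sigma)$, then sum over the integer shift using the convergent series $\sum (\log|n|)^{8k/\pi}/|n|^2$ — is the same as the paper's, and the identification of the three extremal regimes producing the three terms is also correct in spirit. However, there is a genuine gap in the order in which you apply the two lemmas, and it breaks the argument precisely in the regime where the second term $|F(\sigma)|(\sigma-1)^{k(1-4/\pi)}/\alpha^k$ is supposed to be the bound.

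You always apply Lemma \ref{lem:MVlemma2} \emph{first}, at real part $1+\sigma'$, obtaining a multiplier of the form $(\log|t|/\sigma')^{4k/\pi}$ (or $(1+|t|/\sigma')^{4k/\pi}$), and only afterwards apply Lemma \ref{lem:MVlemma1} to replace $|F(1+\sigma')|$ by $|F(\sigma)|$. Consider the case $\alpha < \sigma-1$ and $\sigma' = \alpha$. Your route gives, after the tail reduction,
\begin{equation}
\max_{\alpha\le\sigma'\le 1}\frac{|F(1+\sigma')|}{\sigma'^{4k/\pi}}
\;\gg\; \frac{|F(1+\alpha)|}{\alpha^{4k/\pi}}
\;\asymp\; |F(\sigma)|\,\frac{(\sigma-1)^k}{\alpha^{\,k+4k/\pi}},
\end{equation}
where the last step uses the lower bound in Lemma \ref{lem:MVlemma1}. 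Comparing with the target's second term,
\begin{equation}
\frac{(\sigma-1)^k/\alpha^{\,k+4k/\pi}}{(\sigma-1)^{k-4k/\pi}/\alpha^{k}}
= \left(\frac{\sigma-1}{\alpha}\right)^{4k/\pi},
\end{equation}
which can be arbitrarily large. For $k>\pi/4$ the exponent $1-4k/\pi$ is negative, so this term also dominates the first term $(\sigma-1)^k/\alpha^{k+1}$ and the third term. So your claim that the tail ``is absorbed into the second and third terms'' fails, and no choice of case split can repair it along this route; the loss is built into the order of the two lemmas. The same issue recurs in your $n=0$, $|t|>\sigma'$ subcase.

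The paper avoids this by reversing the order when $\sigma'\le\sigma-1$: it first applies Lemma \ref{lem:MVlemma1} \emph{horizontally at height $t$} (the proof of that lemma extends verbatim to $F(\sigma_2+it)/F(\sigma_1+it)$ since $|n^{-it}|=1$), passing from $F(1+\sigma'+it)$ to $F(\sigma+it)$ and picking up only $((\sigma-1)/\sigma')^k$, and only then applies Lemma \ref{lem:MVlemma2} vertically, from $F(\sigma+it)$ to $F(\sigma)$, so that the $4k/\pi$-power penalty is $(\log|t|/(\sigma-1))^{4k/\pi}$ rather than $(\log|t|/\sigma')^{4k/\pi}$. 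Since $\sigma-1>\sigma'$ in this regime, the exponentiated factor is smaller by exactly the offending $((\sigma-1)/\sigma')^{4k/\pi}$, and the second term emerges cleanly. You should incorporate this horizontal-then-vertical path for $\sigma'\le\sigma-1$, keeping your vertical-then-horizontal path only for $\sigma'\ge\sigma-1$, where the two orders give comparable answers.
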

\begin{proof} We suppose that $0<\a\leq \b\leq 1$ throughout this proof. 
Our first goal is to show that for $|t|\leq \frac{1}{2}$, we have 
\begin{align} \label{bound-t small}
\frac{F(1+\b +it)}{\b+it}  \ll |F(\s)| \left(\frac{(\s-1)^k}{\a^{k+1} }  + \frac{(\s-1)^{k(1-4/\pi)}  }{\a^{k} }  + \frac{\a^{k(1-4/\pi)}}{ (\s-1)^k} \right). 
\end{align}	
To prove this,  consider first the case $\s\leq \b+1$. Then from Lemma \ref{lem:MVlemma2} we have
\begin{align}
\frac{F(1+\b+it)}{F(1+\b)}\ll \left(1+\frac{|t|}{\b}\right)^{4k/\pi}\ll |\b+it|\b^{-\frac{4k}{\pi}} \left(\b+|t|\right)^{\frac{4k}{\pi}-1}\ll_k|\b+it|\b^{-\frac{4k}{\pi}}.\label{f01}
\end{align}
From this and Lemma \ref{lem:MVlemma1} we have 
\begin{align}
\frac{F(1+\b+it)}{\b+it}\ll |F(\s)|\b^{-\frac{4k}{\pi}}\left\lvert\frac{F(1+\b)}{F(\s)}\right\rvert\ll |F(\s)|\b^{-\frac{4k}{\pi}}\left(\frac{\b}{\s-1}\right)^k\ll |F(\s)|\frac{ \a^{k-\frac{4k}{\pi}} } {(\s-1)^{k}},\label{tsmall case1}
\end{align}
since $\a\leq \b$. Next we consider $\s\ge \b+1$. Then by Lemma \ref{lem:MVlemma1} we have 
\begin{align}
\frac{|F(1+\b+it)|}{|F(\s+it)|}\ll\left(\frac{\s-1}{\b}\right)^{k}\ll \frac{|\b+it|}{|\a+it|}\left(\frac{\s-1}{\b}\right)^{k}\label{fab},
\end{align}
for $\b\geq \a$.
From this and Lemma \ref{lem:MVlemma2} we find
\begin{align}
\frac{F(1+\b+it)}{\b+it}\ll\frac{|F(\s+it)|}{|\a+it|}\left(\frac{\s-1}{\b}\right)^{k}\ll \frac{|F(\s)|}{|\a+it|}\left(1+\frac{|t|}{\s-1}\right)^{4k/\pi}\left(\frac{\s-1}{\a}\right)^{k}.\label{fff01}
\end{align}
If $|t|\leq \s-1$, then the right hand side above is
\begin{align}
\ll_k |F(\s)|(\s-1)^k\a^{-1-k}\label{tsmall case21},
\end{align}
whereas for $\s-1\leq |t|\leq 1/2$, it is 
\begin{align}
\ll |F(\s)| 
 \frac{(\s-1+|t|)^{4k/\pi}}{|t|}   (\s-1)^{-4k/\pi}\left(\frac{\s-1}{\a}\right)^{k} 
 &\ll_k |F(\s)|\,|t|^{\frac{4k}{\pi}- 1}\frac{( \s-1)^{k(1- 4/\pi) }} { \a^{k}}
 \\
&\ll_k |F(\s)|\frac{( \s-1)^{k(1- 4/\pi) }} { \a^{k}}.\label{tsmall case22}
\end{align}
Combining the bounds \eqref{tsmall case1}, \eqref{tsmall case21} and \eqref{tsmall case22}, we see that \eqref{bound-t small} is proved. 

Next, we will  prove a similar bound for the case $|t-m|\leq \frac {1}{2}$, with $m$ a non-negative integer. Proceeding in a manner analogous to the previous argument, let $\s\leq \b+1$.  Then by Lemma \ref{lem:MVlemma2} and Lemma \ref{lem:MVlemma1} we deduce 
\begin{align}
F(1+\b+it)\ll F(1+\b)\left(\frac{\log 2|m|}{\b}\right)^{4k/\pi}&\ll |F(\s)|\left(\frac{\b}{\s-1}\right)^k\left(\frac{\log 2|m|}{\b}\right)^{4k/\pi}\\
&\ll |F(\s)|\frac{\a^{k-\frac{4k}{\pi}}}{(\s-1)^k}(\log 2|m|)^{4k/\pi}, \label{fbb01}
\end{align}
since $\a \le \b$. 
We now consider the case  $\s \ge \b+1$. Then by Lemmas \ref{lem:MVlemma1} and \ref{lem:MVlemma2}, we have 
\begin{align}
F(1+\b+it)\leq |F(\s+it)|\left(\frac{\s-1}{\b}\right)^k\ll|F(\s)|\left(\frac{\log 2|m|}{\s-1}\right)^{4k/\pi}\left(\frac{\s-1}{\a}\right)^k.\label{fbb02}
\end{align}
Combining the two bounds above, for $|t-m|\le 1/2$, $m \in \mathbb{Z}, m\ne 0$, we have established 
\begin{align}\label{bound-t big}
F(1+\b+it) \ll |F(\s)|(\log 2|m|)^{4k/\pi} \left(\frac{(\s-1)^{k(1-4/\pi)}}{ \a^k} + \frac{\a^{k(1-4/\pi)}}{ (\s-1)^k}  \right). 
\end{align}
Using \eqref{bound-t small} and \eqref{bound-t big} in the definition \eqref{eq:M1(alpha)} of $M_1(\a)$ allows us to complete the proof of the lemma. 
\end{proof}

Plugging in the bound for $M_1(\a)$ given by Lemma \ref{lem:keylemma3}, into Theorem \ref{thm:F1M1} gives us
\begin{align}
\frac{1}{\log x}\int_{1/\log x}^1 \frac{M_1(\a)}{\a} d\a \ll  
|F(\s)| \left( (\s-1)^k (\log x)^k + (\s-1)^{k(1-4/\pi)} (\log x)^{k-1} + \frac{(\log x)^{\frac{4k}{\pi}-k-1}}{(\s-1)^k} \right).
\end{align} 
We complete the proof of Theorem \ref{thm:F1estimate} by observing that since $\s-1 \gg 1/\log x$, one has 
\[
 (\s-1)^{k(1-4/\pi)} (\log x)^{k-1} \gg \frac{(\log x)^{\frac{4k}{\pi}-k-1}}{(\s-1)^k}.  
\] 

\section{Proof of Theorem \ref{thm:zerofreeF}} \label{sec8}
Let $F_1(x)$ be as defined in \eqref{eq:F1}. 
Using integration by parts followed by Theorem \ref{thm:F1estimate}, we find that 
\begin{align}
\sum_{n> N}\frac{f(n)}{n^{\s}}
&=-F_1(N)N^{1-\s}+(\s-1)\int_N^{\infty}\frac{F_1(t)}{t^{\s}}~dt\\
&\ll |F(\s)|(\s-1)^{k}\left((\s-1)^{-4k/\pi}+\log N\right)(\log N)^{k-1}N^{1-\s}\label{fN}
\end{align}
for $1+\frac{1}{\log N}\leq \s\leq 2$. We note that the partial sum 
\begin{align}
F_N(s):= \sum_{n \le N} \frac{f(n)}{n^s} = F(s)-\sum_{n> N}\frac{f(n)}{n^{s}}.\label{ff}
\end{align}
If we replace $f(n)$ by $f(n)n^{-it}$ in \eqref{fN} and apply the resulting bound into  \eqref{ff},  we see that 
\begin{align}
F_N(s)=F(s)(1+O((\log\log N)^{k-\frac{4k}{\pi}}))
\end{align}
uniformly for 
\begin{align}
\s\geq 1+\left(\frac{4k}{\pi}-1\right)\frac{\log\log N}{\log N}.\label{hp}
\end{align}
Recalling that $f\in \mathcal{ C}(k)$ implies that $F(s)\neq 0$ for $\s>1$,  this means that  $F_N(s)\neq 0$ in the half-plane given by \eqref{hp}.

\section*{Acknowledgments} %Rough version so far
The authors would like to thank 
Andrew Granville, Adam Harper, Oleksiy Klurman,  Dimitris Koukoulopoulos, M. Ram Murty and  G. Tenenbaum for very useful suggestions and comments. 
This work was initiated by the authors while attending the workshop on Recent developments in Analytic Number Theory at MSRI, Berkeley and was continued while they were postdoctoral fellows at Rice University and University of Waterloo respectively. The authors express their sincere gratitude to  these institutes for providing a stimulating  atmosphere conducive to research.

%MSRI 
%Rice
%Waterloo

\end{document}